\numberwithin{equation}{section}
\providecommand{\customgenericname}{}
\newcommand{\newcustomtheorem}[2]{\newenvironment{#1}[1]
  {\renewcommand\customgenericname{#2}
   \renewcommand\theinnercustomgeneric{##1}\innercustomgeneric}{\endinnercustomgeneric}}
\newcommand{\newcustomlemma}[2]{\newenvironment{#1}[1]
  {\renewcommand\customgenericname{#2}
   \renewcommand\theinnercustomgeneric{##1} \innercustomgeneric}{\endinnercustomgeneric}}
\theoremstyle{plain}
\newtheorem{theorem}{Theorem}[section]
\newtheorem{lemma}[theorem]{Lemma}
\newtheorem{proposition}[theorem]{Proposition}
\theoremstyle{remark}
\theoremstyle{definition}
\newcommand{\bnu}{\begin{enumerate}}
\newcommand{\enu}{\end{enumerate}}
\newcommand{\veq}{\mathrel{\rotatebox{90}{$=$}}}
\newcommand{\q}{\quad}
\newcommand{\qq}{\qquad}
\newcommand{\eset}{\emptyset}
\newcommand{\al}{\alpha}
\newcommand{\ga}{\gamma}
\newcommand{\om}{\omega}
\newcommand{\Om}{\Omega}
\newcommand{\la}{\lambda}
\newcommand{\ep}{\epsilon}
\newcommand{\de}{\delta}
\newcommand{\bbz}{\mathbb{Z}}
\newcommand{\bbzn}{\mathbb Z^n}
\newcommand{\xx}{\mathbf x}
\newcommand{\bbr}{\mathbb{R}}
\newcommand{\bbrn}{\mathbb R^n}
\newcommand{\rn}{\mathbb{R}^n}
\newcommand{\bbc}{\mathbb{C}}
\newcommand{\bbn}{\mathbb{N}}
\newcommand{\NN}{\mathcal{N}}
\newcommand{\UU}{\mathcal{U}}
\newcommand{\II}{\mathcal{I}}
\newcommand{\JJ}{\mathcal{J}}
\newcommand{\LL}{\mathcal{L}}
\newcommand{\xxxi}{\vec{{\xi}\;}}
\newcommand{\yyy}{\vec{{y}}}
\newcommand{\zzz}{\vec{{z}}}
\newcommand{\0}{\vec{\boldsymbol{0}}}
\newcommand{\nnu}{\vec{{\nu}}}
\newcommand{\supp}{\mathrm{supp}}
\newcommand{\p}{\partial}
\newcommand{\nf}{\infty}
\newcommand{\tf}{\tfrac}
\newcommand{\wh}{\widehat}
\newcommand{\wt}{\widetilde}
\newcounter{question}
\newcommand{\bpf}{\begin{proof}}
\newcommand{\epf}{\end{proof}}
\begin{document}

\author{Danqing He}
\address{D. He, School of Mathematical Sciences,
Fudan University, Shanghai,  200433, P.R. China}
\email{hedanqing@fudan.edu.cn}

\author{Bae Jun Park}
\address{B. Park, Department of Mathematics, Sungkyunkwan University, Suwon 16419, Republic of Korea}
\email{bpark43@skku.edu}

\thanks{ D. He is supported by  National Key R$\&$D Program of China (No. 2021YFA1002500), NNSF of China (No. 12161141014),  and Natural Science Foundation of Shanghai (No. 22ZR1404900).
B. Park is supported by NRF grant 2022R1F1A1063637.}

\title{Improved estimates for bilinear rough singular integrals}

\subjclass[2010]{Primary 42B20, 42B99}

\begin{abstract} 
We study bilinear rough singular integral operators $\LL_{\Omega}$ associated with a function $\Omega$ on the sphere $\mathbb{S}^{2n-1}$.
 In the recent work of Grafakos, He, and Slav\'ikov\'a \cite{Gr_He_Sl2020}, they showed that 
$\LL_{\Omega}$ is bounded from $L^2\times L^2$ to $L^1$, provided that $\Omega\in L^q(\mathbb{S}^{2n-1})$ for $4/3<q\le \infty$ with mean value zero. 
In this paper, we provide a generalization of  their result.
We actually prove $L^{p_1}\times L^{p_2}\to L^p$ estimates for $\LL_{\Omega}$ under the assumption
 $$\Omega\in L^q(\mathbb{S}^{2n-1}) \q \text{ for }~\max{\Big(\;\frac{4}{3}\;,\; \frac{p}{2p-1} \;\Big)<q\le \infty}$$
 where $1<p_1,p_2\le\infty$ and $1/2<p<\infty$  with $1/p=1/p_1+1/p_2$ .
Our result improves that of Grafakos, He, and Honz\'ik \cite{Gr_He_Ho2018}, in which the more restrictive condition $\Omega\in L^{\infty}(\mathbb{S}^{2n-1})$ is required for the $L^{p_1}\times L^{p_2}\to L^p$ boundedness.
\end{abstract}

\maketitle


\section{Introduction}
 The study of rough singular integral operators dates back to the work of Calder\'on and Zygmund \cite{Ca_Zy1956}. They proved that the operator $\LL_{\Omega}$, defined by
 $$\LL_{\Omega}f(x):=p.v.\int_{\bbrn}\frac{\Omega(y/|y|)}{|y|^n}f(x-y)dy,$$
 is bounded on $L^p(\bbrn)$ for $1<p<\infty$ where
 $\Omega\in L\log L(\mathbb{S}^{n-1})$ with vanishing integral, namely $\int_{\mathbb{S}^{n-1}}\Omega d\sigma =0$.
 This result was refined by Coifman and Weiss \cite{Co_We1977} and Connett \cite{Co1979}, using the weaker condition that $\Omega$ belongs to the Hardy space $ H^1(\mathbb{S}^{n-1})$.
 The weak type $(1,1)$ boundedness for $\LL_{\Omega}$ in small dimensions was established by Christ and Rubio de Francia \cite{Ch_Ru1988} and independently by Hofmann \cite{Ho1988}, both inspired by the work of Christ \cite{Ch1988}. This was later extended to arbitrary dimensions by Seeger \cite{Se1996}.
 
 \hfill
 
 Coifman and Meyer \cite{Co_Me1975} first studied bilinear singular integrals. Suppose $\Omega$ is an integrable function on $\mathbb{S}^{2n-1}$ with $\int_{\mathbb{S}^{2n-1}}\Omega d\sigma=0$.
 We define the corresponding bilinear rough singular integral operator $\LL_{\Omega}$ (which is denoted as in the linear setting without risk of confusion as the linear counterpart will not appear in the sequel) by
 \begin{equation*}
\LL_{\Om}\big(f_1,f_2\big)(x):=p.v. \int_{(\bbrn)^2}{K(y_1,y_2)f_1(x-y_1)f_2(x-y_2)}~dy_1 dy_2
\end{equation*} 
 where 
 \begin{equation}\label{kerneldef}
K(y_1,y_2):=\frac{\Omega((y_1,y_2)')}{|(y_1,y_2)|^{2n}}, \qquad  (y_1,y_2) \neq (0,0)
\end{equation} for $ (y_1,y_2)':=\frac{(y_1,y_2)}{|(y_1,y_2)|}\in \mathbb{S}^{2n-1}$.
Then Grafakos, He, and Honz\'ik \cite{Gr_He_Ho2018} established the $L^{p_1}\times L^{p_2}\to L^p$ boundedness for $\LL_{\Omega}$.
\begin{customthm}{A}\cite{Gr_He_Ho2018}\label{thma}
Let $1<p_1,p_2<\infty$ and $1/p=1/p_1+1/p_2$.
Suppose that $\Omega\in L^{\infty}(\mathbb{S}^{2n-1})$ and $\int_{\mathbb{S}^{2n-1}}{\Omega}d\sigma =0$. Then there exists a constant $C>0$ such that
\begin{equation*}
\Vert \LL_{\Omega}\Vert_{L^{p_1}\times L^{p_2}\to L^p}\le C \Vert \Omega\Vert_{L^{\infty}(\mathbb{S}^{2n-1})}.
\end{equation*}
\end{customthm}
 It was first proved that
\begin{equation}\label{l2l2l1}
\Vert \LL_{\Omega}\Vert_{L^{2}\times L^{2}\to L^1}\lesssim \Vert \Omega\Vert_{L^{2}(\mathbb{S}^{2n-1})},
\end{equation} using a wavelet decomposition of Daubechies, and then apply the bilinear Calder\'on-Zygmund theory in \cite{Gr_To2002} to extend it to the indices $1<p_1,p_2<\infty$.
In the recent paper of Grafakos, He, and Slav\'ikov\'a \cite{Gr_He_Sl2020}, the estimate (\ref{l2l2l1}) has been improved by replacing $\Vert \Omega\Vert_{L^2(\mathbb{S}^{2n-1})}$ by $\Vert \Omega\Vert_{L^q(\mathbb{S}^{2n-1})}$ for $q>4/3$, as an application of the following theorem.
 \begin{customthm}{B}\cite{Gr_He_Sl2020, Sl2021}\label{thmb}
 Let $1< r<4$. Set $M$ to be a positive integer satisfying 
$$
M>\frac{2n}{4-r}.
$$
Suppose that $\mathrm{m}\in L^r((\bbrn)^{2})\bigcap \mathscr{C}^{M}((\bbrn)^{2})$ with 
$$\big\Vert \partial^{\alpha}\mathrm{m} \big\Vert_{L^{\infty}((\bbrn)^{2})}\le D_0<\infty, \q \text{for }~ |\alpha|\le M.$$
Then the bilinear operator $T_{\mathrm{m}}$ defined by
$$T_{\mathrm{m}}(f_1,f_2)(x):=\int_{(\bbrn)^2} \mathrm{m}(\xi_1,\xi_2)\wh{f_1}(\xi_1)\wh{f_2}(\xi_2)e^{2\pi i\langle x,\xi_1+\xi_2\rangle} d\xi_1 d\xi_2,$$
where $\wh{f}(\xi):=\int_{\bbrn}f(x)e^{-2\pi i\langle x,\xi\rangle}dx$ denotes the Fourier transform of $f$,
is bounded from $L^2\times L^2$ to $L^1$. Moreover,  we have
\begin{equation*}
\big\Vert T_{\mathrm{m}}\big\Vert_{L^2\times L^2\to L^{1}}\lesssim D_0^{1-\frac{r}{4}} \Vert \mathrm{m}\Vert_{L^r((\bbrn)^{2})}^{\frac{r}{4}}.
\end{equation*} 
Conversely, for $r\ge 4$, there is a function $\mathrm{m}\in L^r((\bbrn)^2)\cap \mathscr{C}^{\infty}((\bbrn)^2) $ such that $T_{\mathrm{m}}$ does not map $L^2\times L^2\to L^1$.
 \end{customthm}
 We remark that the $L^{2}\times L^2\to L^1$ estimate in Theorem \ref{thmb} can be generalized to $L^{p_1}\times L^{p_2}\to L^{p}$ for all indices $p_1,p_2,p$ satisfying $2\le p_1,p_2\le \infty$, $1\le p\le 2$, and $1/p=1/p_1+1/p_2$, using duality and interpolation. 
 As a consequence, we actually have the following result.
  \begin{customthm}{C}\cite{Gr_He_Sl2020}\label{thmc}
 Let $4/3<q\le \infty$ and assume that $\Omega\in L^q(\mathbb{S}^{2n-1})$ with $\int_{\mathbb{S}^{2n-1}}\Omega d\sigma=0$.
 Then there exists a constant $C>0$ such that
 \begin{equation}\label{mainestimate}
 \Vert \LL_{\Omega}\Vert_{L^{p_1}\times L^{p_2}\to L^p}\le C \Vert \Omega\Vert_{L^q(\mathbb{S}^{2n-1})}
 \end{equation}
 whenever $2\le p_1,p_2\le \infty$, $1\le p\le 2$, and $1/p=1/p_1+1/p_2$.
 \end{customthm}
  The condition $q>4/3$ is corresponding to $r<4$ in Theorem \ref{thmb} with the relationship $1/q+1/r=1$.
 
It is natural to ask for the optimal range of $q$ for which the boundedness (\ref{mainestimate}) holds.
In \cite{Gr_He_Sl2019}, Grafakos, He, and Slav\'kov\'a show that there exists $\Omega\in L^q(\mathbb{S}^{2n-1})$ with mean value zero such that $\LL_{\Omega}$ is not bounded from $L^{p_1}\times L^{p_2}$ to $L^p$ for $1\le p_1,p_2<\infty$ and $1/2\le p<1$ with $1/p=1/p_1+1/p_2$ if $q$ is near $1$ ( explicitly, $1\le q<\frac{2n-1}{2n-1/p}$ ). This is different from the linear case in which the $L^p$ boundedness holds if $\Omega\in L^q(\mathbb{S}^{n-1})$ for $1<q\le \infty$ as $L^q(\mathbb{S}^{n-1})\subset L\log L(\mathbb{S}^{n-1})\subset H^1(\mathbb{S}^{n-1})$.
However, the estimate (\ref{mainestimate}) remains still open for $\frac{2n-1}{2n-1/p}\le q\le \frac{4}{3}$.

\hfill

In this paper, we provide a generalization of Theorem \ref{thmc} in the whole range $1<p_1,p_2\le\infty$ and $1/2<p<\infty$ with $1/p=1/p_1+1/p_2$, which improves the result in Theorem \ref{thma}.
The main result is as follows:
\begin{theorem}\label{mainthm}
  Let $1<p_1,p_2\le\infty$ and $1/2< p<\infty$ with $1/p=1/p_1+1/p_2$.
Suppose that $$\max{\Big( \frac{4}{3},\frac{p}{2p-1} \Big)}<q\le \infty,$$
and $\Omega \in L^q (\mathbb S^{2n-1})$ with $\int_{\mathbb S^{2n-1}} \Omega \, d\sigma=0$. Then the estimate (\ref{mainestimate}) holds
\end{theorem}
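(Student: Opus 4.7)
The strategy is to decompose the kernel dyadically in space, apply Theorem \ref{thmc} to each piece in order to get a decaying $L^2\times L^2\to L^1$ bound, combine it with a rougher bound that grows only polynomially in the dyadic scale, and then interpolate bilinearly to reach the full range. The threshold $q>\max(4/3,p/(2p-1))$ should emerge as the condition for the resulting geometric series to converge.

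Concretely, fix a radial $\beta\in C_c^{\infty}$ supported in $[1/2,2]$ with $\sum_{j\in\bbz}\beta(2^{-j}t)=1$ for $t>0$ and write $\LL_{\Omega}=\sum_{j\in\bbz}T_j$, where $T_j$ is the bilinear operator with kernel
\[
K^j(y_1,y_2):=K(y_1,y_2)\,\beta\!\big(2^{-j}|(y_1,y_2)|\big),
\]
and multiplier $m_j:=\widehat{K^j}$. The multiplier $m_j$ is essentially supported on $|\xi|\sim 2^{-j}$ and, using the vanishing mean of $\Omega$, enjoys smoothness bounds $|\partial^{\alpha}m_j(\xi)|\lesssim 2^{j|\alpha|}\|\Omega\|_{L^1(\mathbb S^{2n-1})}$ and decay $|m_j(\xi)|\lesssim \min\{2^j|\xi|,(2^j|\xi|)^{-\delta}\}\|\Omega\|_{L^1(\mathbb S^{2n-1})}$ for some $\delta>0$. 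Moreover, Hausdorff--Young with $1/q+1/r=1$ combined with scaling gives $\|m_j\|_{L^r((\bbrn)^2)}\lesssim 2^{-2nj/r}\|\Omega\|_{L^q}$. Applying Theorem \ref{thmb} (equivalently Theorem \ref{thmc}) to $m_j$ then yields, for some $\eta=\eta(q)>0$ whenever $q>4/3$,
\[
\|T_j\|_{L^2\times L^2\to L^1}\lesssim 2^{-\eta j}\|\Omega\|_{L^q}\qquad(j\ge 0),
\]
with a symmetric estimate for $j<0$.

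On the other hand, $K^j$ is smooth and compactly supported, so $T_j$ satisfies bilinear Calder\'on--Zygmund kernel estimates with constants that grow only polynomially in $j$. Combining the seed of the previous paragraph with the Grafakos--Torres bilinear Calder\'on--Zygmund theory -- supplemented for $p<1$ by a bilinear Calder\'on--Zygmund decomposition giving weak-type endpoints near $L^1\times L^1\to L^{1/2,\infty}$ -- produces $L^{p_1}\times L^{p_2}\to L^p$ bounds for $T_j$ over the whole range of interest, with norm at most $2^{\sigma j}$ times $\|\Omega\|_{L^q}$ for some $\sigma$ depending on the indices. Interpolating these rough bounds against the decaying $L^2\times L^2\to L^1$ estimate (by the complex method in the Banach range, and by the real method with Hardy-type endpoints when $p<1$) yields, for each $(p_1,p_2,p)$ in the stated range,
\[
\|T_j\|_{L^{p_1}\times L^{p_2}\to L^p}\lesssim 2^{-\eta' j}\|\Omega\|_{L^q},\qquad \eta'>0,
\]
precisely when $q>\max(4/3,p/(2p-1))$, so that summing in $j$ completes the proof. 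The main obstacle is the quantitative interpolation in this last step: one must pick the rough endpoint carefully, tune the interpolation parameter so that the resulting decay exponent is strictly positive, and identify the sharp threshold $q=p/(2p-1)$. The absence of duality for $p<1$ forces the use of real or complex interpolation with non-Banach targets, and tracking the $j$-dependence through this procedure is the chief technical hurdle.
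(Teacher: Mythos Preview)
There is a genuine gap at the very first step. Your claimed decay $\|T_j\|_{L^2\times L^2\to L^1}\lesssim 2^{-\eta j}\|\Omega\|_{L^q}$ is false. Since $K$ is homogeneous of degree $-2n$, the dyadic pieces satisfy $K^j(\yyy)=2^{-2nj}K^0(2^{-j}\yyy)$, so $T_j(f_1,f_2)(x)=T_0\big(f_1(2^j\cdot),f_2(2^j\cdot)\big)(2^{-j}x)$, and the scaling relation $1/p=1/p_1+1/p_2$ gives $\|T_j\|_{L^{p_1}\times L^{p_2}\to L^p}=\|T_0\|_{L^{p_1}\times L^{p_2}\to L^p}$ for every $j$. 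No decay in $j$ is possible from a single spatial decomposition; the sum $\sum_j T_j$ converges only through cancellation between scales, not because the individual pieces are small. Your appeal to Theorem~\ref{thmb} cannot repair this: plugging $D_0\sim 2^{jM}$ and $\|m_j\|_{L^r}\sim 2^{-2nj/r}$ into $D_0^{1-r/4}\|m_j\|_{L^r}^{r/4}$ produces the exponent $M(1-r/4)-n/2$ on $2^j$, and the hypothesis $M>2n/(4-r)$ forces this to be \emph{positive}.

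The missing idea is a second decomposition, on the frequency side, in the spirit of Duoandikoetxea--Rubio de Francia. One writes $K^{\ga}=\sum_{\mu}K_{\mu}^{\ga}$ via a Littlewood--Paley partition of $\wh{K^{\ga}}$, and then sums over $\ga$ first to form $K_{\mu}:=\sum_{\ga}K_{\mu}^{\ga}$ and the associated operator $\LL_{\mu}$. The parameter $\mu$ measures the mismatch between spatial and frequency scales and is dilation-invariant; it is $\LL_{\mu}$ (not your $T_j$) that satisfies $\|\LL_{\mu}\|_{L^2\times L^2\to L^1}\lesssim 2^{-\epsilon_0\mu}$ for large $\mu$. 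The paper then does not rely on generic bilinear Calder\'on--Zygmund constants that grow polynomially, but proves endpoint bounds $\|\LL_{\mu}\|\lesssim_{\delta}2^{\delta\mu}$ with \emph{arbitrarily small} $\delta>0$ at $L^1\times L^{\infty}\to L^{1,\infty}$, $L^{\infty}\times L^{\infty}\to BMO$, and $L^1\times L^{p_0}\to L^{p,\infty}$ (Propositions~\ref{1i1}, \ref{iii}, \ref{pip}); interpolation against the $2^{-\epsilon_0\mu}$ estimate then yields a net negative exponent. Finally, the threshold $q>p/(2p-1)$ is not an artifact of interpolation bookkeeping: it appears concretely in the proof of Proposition~\ref{pip}, where the weak-type argument needs the $L^{p_0}$ boundedness of $\mathcal{M}_{q'}\mathcal{M}$, i.e.\ $q'<p_0$, which is exactly $q>p_0/(p_0-1)=p/(2p-1)$.
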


For comparison with Theorems \ref{thma} and \ref{thmc}, we refer to Figure \ref{fig1}.
It seems that
the bilinear Caler\'on-Zygmund theory in establishing Theorem \ref{thma} 
is not applicable to the case when $\Omega\in L^q$ for $q\not= \infty$, which requires a more delicate analysis.
Actually we develop a bilinear Calder\'on-Zygmund argument adapted to bilinear rough singular integrals, which works effectively combined with  the dyadic decomposition in \cite{Du_Ru} and has  potential applications to other related operators.
This strategy however does not yield the boundedness \eqref{mainestimate} of the endpoints, say, when $p_1=\infty$. To overcome this obstacle, we need a  decay in the local $L^2$ cases improving the duality result in Theorem~\ref{thmc}, which follows from a refinement of the wavelet argument developed in \cite{Gr_He_Ho2018} and \cite{Gr_He_Sl2020}; see Proposition~\ref{05032} for the accurate formulation.
In summary, we establish a decay at $L^2\times L^\nf\to L^2$, and obtain arbitrarily slow growth at $L^{1}\times L^{p_2}\to L^{p,\infty}$ for $1/p=1+1/p_2$, and $L^{\infty}\times L^{\infty}\to BMO$.

 \begin{figure}[h]\label{fig1}
\begin{tikzpicture}
\path[fill=gray!25] (0,0)--(0,3)--(3,3)--(3,0)--(0,0);
\draw [<->] (0,3.5)--(0,0)--(3.5,0);
\draw[dash pattern= { on 2pt off 1pt}] (0,3)--(3,3)--(3,0);
\node [below left] at (0,0) {\tiny$(0,0)$};
\node [below] at (3,0) {\tiny$(1,0)$};
\node [left] at (0,3) {\tiny$(0,1)$};
\node  at (1.5,1.5) {\tiny$q=\infty$};
\node [right] at (3.5,0) {\tiny${\frac{1}{p_1}}$};
\node [above] at (0,3.5) {\tiny${\frac{1}{p_2}}$};
\node [below] at (1.7,-1) {in Theorem \ref{thma}};
\path[fill=gray!25] (5,1.5)--(6.5,1.5)--(6.5,0)--(5,1.5);
\draw [<->] (5,3.5)--(5,0)--(8.5,0);
\draw[dash pattern= { on 2pt off 1pt}] (5,3)--(8,3)--(8,0);
\draw[-] (6.5,0)--(6.5,1.5)--(5,1.5)--(6.5,0);
\node  at (6,1.1) {\tiny$q>\frac{4}{3}$};
\node [left] at (5,1.5) {\tiny$(0,\frac{1}{2})$};
\node [below left] at (5,0) {\tiny$(0,0)$};
\node [below] at (6.5,0) {\tiny$(\frac{1}{2},0)$};
\node [right] at (6.5,1.5) {\tiny$(\frac{1}{2},\frac{1}{2})$};
\node [right] at (8.5,0) {\tiny${\frac{1}{p_1}}$};
\node [above] at (5,3.5) {\tiny${\frac{1}{p_2}}$};
\node [below] at (6.7,-1) {in Theorem \ref{thmc}};

\path[fill=gray!25] (10,0)--(13,0)--(13,0.75)--(10.75,3)--(10,3)--(10,0);
\path[fill=green!10] (13,0.75)--(13,3)--(10.75,3)--(13,0.75);
\draw [<->] (10,3.5)--(10,0)--(13.5,0);
\draw[dash pattern= { on 2pt off 1pt}] (10,3)--(13,3)--(13,0);
\draw[-] (10.75,3)--(13,0.75);
\node [below left] at (10,0) {\tiny$(0,0)$};
\node [below] at (13,0) {\tiny$(1,0)$};
\node [left] at (10,3) {\tiny$(0,1)$};
\node [above] at (10.75,3) {\tiny$(\frac{1}{4},1)$};
\node [right] at (13,0.75) {\tiny$(1,\frac{1}{4})$};
\node [above right] at (13,3) {\tiny$(1,1)$};
\node  at (11.3,1.3) {\tiny $q>\frac{4}{3}$};
\node  at (12.3,2.4) {\tiny $q>\frac{p}{2p-1}$};
\node [right] at (13.5,0) {\tiny${\frac{1}{p_1}}$};
\node [above] at (10,3.5) {\tiny${\frac{1}{p_2}}$};
\node [below] at (11.6,-1) {in Theorem \ref{mainthm}};
\end{tikzpicture}
\caption{The range of $q$ for the estimate (\ref{mainestimate})}
\end{figure}

\hfill

Section \ref{preliminary} contains some preliminary materials that will be crucial tools in the proof of Theorem \ref{mainthm}.
We set up the structure of the proof of Theorem \ref{mainthm} in Section \ref{reduction}. The argument in this section actually appeared in \cite{Gr_He_Ho2018} and matters will, in turn, reduce to operators with smooth kernels, which come up in a dyadic decomposition of the kernel $K$. We complete the proof in the remaining sections, which are actually the main parts of this paper, by mostly dealing with end-point estimates of weak-type and $BMO$-type, and by interpolating such boundedness results.

 \hfill
 
{\textit{Notation.}}
Let $\bbn$ and $\bbz$ be the sets of all natural numbers and all integers, respectively. We use the symbol $A\lesssim B$ to indicate that $A\leq CB$ for some constant $C>0$ independent of the variable quantities $A$ and $B$, and $A\sim B$ if $A\lesssim B$ and $B\lesssim A$ hold simultaneously.
For each cube $Q$ in $\bbrn$, let $\ell(Q)$ and $c_Q$ mean the side-length and the center of $Q$, respectively. Let $Q^*$ be the concentric dilation of $Q$ with $\ell(Q^*)=10^2 \sqrt{n}\ell(Q)$ and denote by  $\chi_Q$ the characteristic function of $Q$.
For simplicity, we adopt the notation $\yyy:=(y_1,y_2)\in (\bbrn)^2$, $\zzz:=(z_1,z_2)\in (\bbrn)^2$,  $\xxxi:=(\xi_1,\xi_2)\in (\bbrn)^2$, and $\nnu:=(\nu_1,\nu_2)\in (\bbzn)^2$.

\section{Preliminaries}\label{preliminary}

\subsection{Maximal inequality}
Let $\mathcal{M}$ denote the Hardy-Littlewood maximal operator, defined by
$$\mathcal{M}f(x):=\sup_{Q:x\in Q}\frac{1}{|Q|}\int_Q{|f(y)|}dy$$
for a locally integrable function $f$ on $\bbrn$, where the supremum is taken over all cubes $Q$ containing $x$. 
For given $0<r<\infty$, we define $\mathcal{M}_rf:=\big( \mathcal{M}\big(|f|^r\big)\big)^{1/r}$. Then it is well-known that
\begin{equation}\label{HLmaximal}
\big\Vert \mathcal{M}_rf\big\Vert_{L^p(\bbrn)}\lesssim \Vert f\Vert_{L^p(\bbrn)}
\end{equation} whenever $0<r<p\le \infty$.

\hfill

\subsection{Interpolations}
The space $BMO(\bbrn)$ is the family of locally integrable functions $f$ on $\bbrn$ such that the norm
\begin{equation}\label{bmodef}
\Vert f\Vert_{BMO}:=\sup_{Q}\inf_{a\in\bbc}\frac{1}{|Q|}\int_Q{\big| f(x) -a \big|}dx
\end{equation}
is finite, where the supremum ranges over all cubes $Q$ in $\bbrn$. This space, introduced by John and Nirenberg \cite{Jo_Ni1961} and characterized by Fefferman \cite{Fe1971}, plays a significant role in interpolation as it may substitute $L^{\infty}$, satisfying 
\begin{equation}\label{realinter}
(L^q,BMO)_{\theta,r}=L^{p,r}=(L^q,L^{\infty})_{\theta,r}
\end{equation}
and
\begin{equation}\label{complexinter}
[L^q,BMO]_{\theta}=L^p=[L^q,L^{\infty}]_{\theta}
\end{equation} 
where $0<r\le \infty$ and $1/p=(1-\theta)/q$ for $0<\theta<1$.
Here, the symbols $(\cdot ,\cdot)_{\theta,r}$ and $[\cdot, \cdot]_{\theta}$ mean the real interpolation, so called $K$-method, and the complex method of interpolation in Calder\'on \cite{Ca1964}, respectively.
We refer to \cite{Ha1977, Ja_Jo1982} for the interpolation.
In (\ref{realinter}), the space $L^{p,r}$ is the Lorentz space, which is a generalization of the Lebesgue space $L^p$ as $L^{p,p}=L^p$, and in this paper, we are only concerned with  $L^{p,\infty}$, which is specially called weak $L^p$ space.
For $0<p\le \infty$, the space $L^{p,\infty}(\bbrn)$ is defined by the set of all measurable functions $f$ with the (quasi-)norm
$$\Vert f\Vert_{L^{p,\infty}(\bbrn)}:=\begin{cases}
\sup_{t>0}t \big|\big\{x\in\bbrn: |f(x)|>t \big\} \big|^{1/p}, & 0<p<\infty\\
\qq\qq\q \Vert f\Vert_{L^{\infty}(\bbrn)}\qq\qq\q,& p=\infty
\end{cases}.
$$
Similar to the space $BMO(\bbrn)$, interpolation results involving $L^{p,\infty}(\bbrn)$ as an end-point are useful tools to deduce the boundedness of many operators on the Lebesgue spaces. See the references \cite{Be_Sh1988, Be_Lo1976} for details. 
Indeed, in the proof of Theorem \ref{mainthm}, we will investigate $L^{p_1}\times L^{p_2}\to L^{p,\infty}$ boundedness for a certain bilinear operator to prove the strong-type estimate via interpolation. For this process, we present a bilinear version of the Marcinkiewicz interpolation theorem, which is a straightforward corollary of \cite[Theorem 3]{Ja1988} or \cite[Theorem 1.1]{Gr_Li_Lu_Zh2012}.
\begin{lemma}\cite{ Gr_Li_Lu_Zh2012,Ja1988}\label{interpollemma}
Let $0<p_{1}^0,p_{2}^0,p_{1}^1,p_{2}^1,p_1^2,p_2^2\le \infty$ and $0<p^0,p^1,p^2\le \infty$ with $1/p^j=1/p^j_{1}+1/p^j_{2}$ for $j=0,1,2$.
Suppose that $T$ is a bilinear operator having the mapping properties
\begin{equation*}
\big\Vert T(f_1,f_2) \big\Vert_{ L^{p^{j},\infty}(\bbrn)}\le M_j\Vert f_1\Vert_{L^{p_{1}^j}(\bbrn)}\Vert f_2\Vert_{L^{p_{2}^j}(\bbrn)}, \quad j=0,1,2
\end{equation*} for Schwartz functions $f_1,f_2$ on $\bbrn$.
Then for any $0<\theta_j<1$ with $\theta_0+\theta_1+\theta_2=1$, and $0<p_1,p_2,p \le \infty$  satisfying
\begin{equation*}
 \frac{1}{p_1}= \frac{\theta_0}{p_1^0}+ \frac{\theta_1}{p_1^1}+ \frac{\theta_2}{p_1^2}, \qquad \frac{1}{p_2}= \frac{\theta_0}{p_2^0}+ \frac{\theta_1}{p_2^1}+ \frac{\theta_2}{p_2^2},
\end{equation*}
\begin{equation*}
\frac{1}{p}= \frac{\theta_0}{p^0}+ \frac{\theta_1}{p^1}+ \frac{\theta_2}{p^2},
\end{equation*} we have
\begin{equation*}
\big\Vert T(f_1,f_2)\big\Vert_{L^{p,\infty}(\bbrn)}\lesssim M_0^{\theta_0}M_1^{\theta_1}M_2^{\theta_2}\Vert f_1\Vert_{L^{p_1}(\bbrn)}\Vert f_2\Vert_{L^{p_2}(\bbrn)}.
\end{equation*}
Moreover, if the points $(1/p_1^0,1/p_2^0)$, $(1/p_1^1,1/p_2^1)$, and $(1/p_1^2,1/p_2^2)$ form a non-trivial triangle in $\bbr^2$, then 
\begin{equation*}
\big\Vert T(f_1,f_2)\big\Vert_{L^{p}(\bbrn)}\lesssim M_0^{\theta_0}M_1^{\theta_1}M_2^{\theta_2}\Vert f_1\Vert_{L^{p_1}(\bbrn)}\Vert f_2\Vert_{L^{p_2}(\bbrn)}.
\end{equation*}

\end{lemma}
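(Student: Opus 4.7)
The plan is to derive Lemma \ref{interpollemma} as a direct specialization of the general $n$-linear Marcinkiewicz interpolation theorem of Janson \cite[Theorem 3]{Ja1988} (equivalently Grafakos--Liu--Lu--Zhang \cite[Theorem 1.1]{Gr_Li_Lu_Zh2012}) to the case $n=2$. First I would set up the correspondence: the two endpoint triples $(p_1^0,p_2^0,p^0)$ and $(p_1^1,p_2^1,p^1)$ both sit on the H\"older hyperplane $1/p = 1/p_1 + 1/p_2$, so the parametric formulas for $1/p_1,1/p_2,1/p$ in terms of $\theta$ trace out a single segment in the parameter cube, which is exactly the configuration handled by the cited multilinear theorems. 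The hypothesis there is a pair of weak-type bilinear bounds at the endpoints, matching our assumption, and the constants produced are $M_0^{1-\theta}M_1^{\theta}$, as in our conclusion.

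For the strong-type conclusion in the nondegenerate case $p^0 \neq p^1$, the proof proceeds by the level-set decomposition technique familiar from the linear Marcinkiewicz theorem. For each height $t>0$, split $f_i = f_i\,\chi_{\{|f_i|>\lambda_i(t)\}} + f_i\,\chi_{\{|f_i|\le \lambda_i(t)\}}$ for $i=1,2$, with $\lambda_1(t),\lambda_2(t)$ chosen optimally as powers of $t$ determined by the endpoint exponents. Bilinearity expands $T(f_1,f_2)$ into four cross terms, and each weak-type hypothesis controls the distribution function of the appropriate piece at level $t$. Integrating the layer-cake identity for $\Vert T(f_1,f_2)\Vert_{L^p}^p$ against $p\,t^{p-1}\,dt$ then yields the desired strong-type $L^p$ estimate; the nondegeneracy $p^0\neq p^1$ is precisely what forces the resulting $t$-integral to converge at both $0$ and $\infty$.

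For the degenerate case $p^0 = p^1 = p$, the same level-set decomposition, combined with a straightforward union bound on distribution functions, yields a weak-type estimate at the common target exponent $p$ but no strong-type improvement, since the $t^{p-1}$-integration that produced the strong norm above can no longer be carried out. The homogeneity of the weak-type norms immediately gives the constant $M_0^{1-\theta}M_1^{\theta}$.

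The only delicate point I anticipate is matching the precise numerology of our statement with the hypotheses of \cite[Theorem 3]{Ja1988}, which is phrased in a more general abstract framework allowing mixed-norm and Lorentz inputs. For our scalar, bilinear situation with pure $L^{p_i^j}$ endpoints and $L^{p^j,\infty}$ targets, this matching is a routine bookkeeping exercise, after which the conclusion of the cited theorem reads verbatim as the two displayed inequalities of Lemma \ref{interpollemma}. The restriction to Schwartz inputs $f_1,f_2$ in the hypothesis is harmless, since in our later applications $T$ is a priori well-defined on Schwartz functions and the interpolated bound extends by density.
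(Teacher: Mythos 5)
The paper itself offers no proof of this lemma --- it is quoted as a direct consequence of \cite[Theorem 3]{Ja1988} and \cite[Theorem 1.1]{Gr_Li_Lu_Zh2012} --- so your first paragraph, which reduces the statement to those references, is exactly what the authors do, and that part is fine. The problem is with the mechanism you then describe for how the cited theorem is proved: the four-cross-term level-set decomposition does not close with only two endpoint hypotheses. Splitting $f_1=f_1^{\mathrm{large}}+f_1^{\mathrm{small}}$ and $f_2=f_2^{\mathrm{large}}+f_2^{\mathrm{small}}$ produces the mixed terms $T(f_1^{\mathrm{large}},f_2^{\mathrm{large}})$ and $T(f_1^{\mathrm{small}},f_2^{\mathrm{small}})$, whose natural exponent pairs are $(p_1^0,p_2^1)$ and $(p_1^1,p_2^0)$ (say $p_1^0\le p_1^1$, hence $p_2^0\ge p_2^1$ on the H\"older hyperplane when $p^0=p^1$, and in general the two exponents move in unconstrained directions). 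These pairs lie \emph{off} the segment joining the two given endpoints, so neither hypothesis applies to them; a truncated $f_i^{\mathrm{large}}$ only gains integrability at exponents \emph{below} $p_i$, and $f_i^{\mathrm{small}}$ only above, which is incompatible with both hypotheses simultaneously for these two terms. The truncation argument works only when the segment is axis-parallel (e.g.\ $p_2^0=p_2^1$, as in the interpolation between $(1/2,0)$ and $(1,0)$ in Region I, where one can freeze $f_2\in L^\infty$ and run the linear Marcinkiewicz theorem), but the lemma is used in Region II precisely for the genuinely two-dimensional segment from $(1,1/p_0)$ to $(1/p_0,1)$. The same objection applies to your ``union bound'' in the degenerate case $p^0=p^1$.

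The actual route in \cite{Ja1988} is abstract real interpolation, not truncation: one shows that a bilinear map bounded at the two endpoints is bounded from $(L^{p_1^0},L^{p_1^1})_{\theta,q_1}\times(L^{p_2^0},L^{p_2^1})_{\theta,q_2}$ into $(L^{p^0,\infty},L^{p^1,\infty})_{\theta,q}$ whenever $1/q\le 1/q_1+1/q_2$, via $J$- and $K$-functional estimates; choosing $q_1=p_1$, $q_2=p_2$, $q=p$ (admissible because $1/p=1/p_1+1/p_2$) and identifying the interpolation spaces with Lorentz spaces gives $L^{p_1}\times L^{p_2}\to L^{p,p}=L^p$ when $p^0\ne p^1$, while for $p^0=p^1=p$ the target couple is a single space and one only retains $L^{p,\infty}$. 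If you intend to cite the references, the ``routine bookkeeping'' is really this identification (and one should be aware that \cite[Theorem 1.1]{Gr_Li_Lu_Zh2012} in its simplex form wants $m+1$ affinely independent endpoints, so the two-point case is the one covered by Janson's segment theorem); if you intend to reprove the result, the truncation scheme as written must be abandoned.
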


\hfill

\subsection{Compactly supported wavelets}

  Let $d$ be a positive integer.
Given two compactly supported functions $\psi_F$ and $\psi_M$ on $\bbr$, we define the function $\Psi_G$, defined on $\bbr^d$, by 
$$
\Psi_G(\xx):=\prod_{i=1}^d\psi_{G_i}(x_i), \qq \xx:=(x_1,\dots,x_d)\in \bbr^d
$$
where $G\in \mathcal I:=\{(G_1,G_2,\dots, G_d): G_{i}\in\{F,M\}\}$. 
Let $\Psi_{G,\nu}(\xx):=\Psi_{G}(\xx-\nu)$ be the translation of $\Psi_G$ by $\nu\in\bbz^d$ and 
 $\mathcal D_0:=\{\Psi_{G,\nu}:G\in\mathcal I,\ \nu\in\bbz^d\}.$
For $\la\in\bbn$ we define
$
\mathcal D_\la:=\{\Psi^{\la}_{ G,\nu}: \nu\in\bbz^d, G\in\mathcal I'\},
$
where $\Psi^{\la}_{ G,\nu}(\xx):=2^{\la d/2}\Psi_{G,\nu}(2^\la \xx-\nu)$, and $\mathcal I'=\mathcal I\setminus\{(F,F,\dots, F)\}$.
We denote $\mathcal D:=\cup_{\la=0}^\nf \mathcal D_\la$.

A classical result of Daubechies \cite{Da}  says that $\mathcal D$ is an orthonormal basis in $L^2(\bbr^d)$ for appropriate choices of $\psi_F$ and $\psi_M$.
\begin{lemma}\cite{Da}\label{daubechieslemma}
For any $N\in\mathbb N$, there exist $\psi_F$ and $\psi_M\in \mathscr{C}_c^N(\bbr)$ such that 
\begin{enumerate}
\item $\|\psi_F\|_{L^2(\bbr)}=\|\psi_M\|_{L^2(\bbr)}=1$,
\item $\int_\bbr x^\al \psi_M(x)dx=0$ for any $0\le \al\le N$,
\item $\mathcal D$ is an orthonormal basis in $L^2(\bbr^d)$.
\end{enumerate}
\end{lemma}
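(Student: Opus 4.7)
The plan is to invoke the classical multiresolution analysis (MRA) construction of Daubechies. Given $N \in \mathbb{N}$, I would choose an integer $N' \gg N$ (to be tuned below) and produce a low-pass filter in the form of a trigonometric polynomial
\[
m_0(\xi) = \Bigl(\frac{1+e^{-i\xi}}{2}\Bigr)^{N'} \mathcal{L}(e^{-i\xi}),
\]
where $\mathcal{L}$ is chosen so that the quadrature-mirror-filter identity $|m_0(\xi)|^2 + |m_0(\xi+\pi)|^2 = 1$ holds and $m_0(0) = 1$. The existence of $\mathcal{L}$ reduces, via Riesz's spectral factorization lemma, to factoring the explicit nonnegative trigonometric polynomial associated with $P_{N'}(y) = \sum_{k=0}^{N'-1}\binom{N'-1+k}{k} y^k$ evaluated at $y = \sin^2(\xi/2)$.

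Next, define the scaling function $\psi_F$ through its Fourier transform by the infinite product
\[
\widehat{\psi_F}(\xi) = \prod_{j=1}^{\infty} m_0(2^{-j}\xi),
\]
which converges uniformly on compact sets because $m_0(0)=1$ and $m_0$ is smooth. Writing $m_0(\xi) = \frac{1}{\sqrt{2}}\sum_{k=0}^{2N'-1} h_k e^{-ik\xi}$, one defines the wavelet by $\psi_M(x) = \sqrt{2}\sum_{k}(-1)^k \overline{h_{1-k}}\,\psi_F(2x-k)$. Compact support is automatic: since the filter is finite, one shows $\supp \psi_F \subset [0, 2N'-1]$ and similarly for $\psi_M$. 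The normalization $\|\psi_F\|_2 = \|\psi_M\|_2 = 1$ follows from $|\widehat{\psi_F}(\xi)|^2 + |\widehat{\psi_M}(\xi)|^2 = |\widehat{\psi_F}(\xi/2)|^2$ together with the Plancherel identity and the QMF condition. The vanishing moments $\int x^\alpha \psi_M(x)\,dx = 0$ for $0\le \alpha\le N$ are forced by the factor $((1+e^{-i\xi})/2)^{N'}$, which gives $m_0$ a zero of order $N'$ at $\xi=\pi$; via the relation $\widehat{\psi_M}(2\xi) = e^{-i\xi}\overline{m_0(\xi+\pi)}\widehat{\psi_F}(\xi)$, this yields $N'$ vanishing moments of $\psi_M$, and it suffices to take $N' \ge N+1$.

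For the orthonormal basis property, standard MRA theory shows that $\{\psi_F(\cdot - k)\}_{k\in\mathbb{Z}}$ is an ONB of a closed subspace $V_0 \subset L^2(\mathbb{R})$ nested in a scaling ladder $V_j = \{f(2^{-j}\cdot): f\in V_0\}$, and that $\{2^{\lambda/2}\psi_M(2^\lambda \cdot -k)\}_{\lambda\ge 0,\, k\in\mathbb{Z}}$ is an ONB of $V_0^\perp$ in $L^2(\mathbb{R})$. The tensor-product structure then promotes this to an ONB of $L^2(\mathbb{R}^d)$: the space $V_0 \otimes \cdots \otimes V_0$ has ONB $\mathcal{D}_0$, while at each dyadic scale $\lambda \ge 1$ the orthogonal increment is spanned by $\{\Psi^\lambda_{G,\nu}\}$ with $G \in \mathcal{I}'$, the excluded tag $(F,\dots,F)$ being precisely what already lies in a coarser scale.

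The main obstacle is verifying the $\mathscr{C}^N$ regularity of $\psi_F$ (from which the same for $\psi_M$ follows via the finite refinement sum). This reduces to establishing the Fourier decay estimate $|\widehat{\psi_F}(\xi)| \lesssim (1+|\xi|)^{-N-1-\epsilon}$, which in turn follows from Daubechies's careful analysis of the infinite product: one separates the binomial factor, whose contribution to $\prod_j m_0(2^{-j}\xi)$ telescopes to $((1-e^{-i\xi})/(i\xi))^{N'}$ and yields decay of order $|\xi|^{-N'}$, from the polynomial piece $\prod_j \mathcal{L}(e^{-i 2^{-j}\xi})$, which grows at most polynomially like $|\xi|^{\log_2 B_{N'}}$ for a constant $B_{N'}$ bounding $\|\mathcal{L}\|_\infty$ on appropriate dyadic intervals. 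Since $B_{N'}$ grows only like a power of $N'$, one has $N' - \log_2 B_{N'} \to \infty$, so choosing $N'$ large enough forces the net decay to exceed $N+1$, securing $\psi_F \in \mathscr{C}^N_c(\mathbb{R})$.
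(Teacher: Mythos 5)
The paper does not prove this lemma at all: it is quoted verbatim from Daubechies' 1988 paper and used as a black box, so your proposal is a reconstruction of the classical construction rather than an alternative to anything in the text. Your outline follows the standard route (QMF filter via Riesz factorization of $P_{N'}(y)=\sum_{k=0}^{N'-1}\binom{N'-1+k}{k}y^k$, infinite product for the scaling function, vanishing moments from the order-$N'$ zero of $m_0$ at $\pi$, MRA plus tensor products for the $d$-dimensional basis), and all of those steps are sound modulo the usual suppressed verifications (e.g.\ Cohen's criterion for orthonormality of the translates of $\psi_F$, which holds here because $P_{N'}>0$ on $[0,1]$ so $m_0$ vanishes only at $\xi=\pi$).

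The one step that is genuinely wrong as written is the regularity count. You claim $B_{N'}=\sup_\xi|\mathcal L(e^{-i\xi})|$ ``grows only like a power of $N'$,'' hence $N'-\log_2 B_{N'}\to\infty$ linearly. In fact $B_{N'}^2=P_{N'}(1)=\binom{2N'-1}{N'}\sim 4^{N'}/(2\sqrt{\pi N'})$, so $B_{N'}$ grows \emph{exponentially} and the deficit is only
\begin{equation*}
N'-\log_2 B_{N'}=\tfrac12+\tfrac14\log_2(\pi N')+o(1).
\end{equation*}
This still tends to infinity, so the crude bound $|\widehat{\psi_F}(\xi)|\lesssim(1+|\xi|)^{-N'+\log_2 B_{N'}}$ does eventually beat any prescribed $N+1$ — but only for $N'$ of size roughly $4^{4N}$, and only after you replace your growth claim by the Stirling computation above. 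If you want the honest linear-in-$N'$ regularity gain that Daubechies actually proves (Hölder exponent $\gtrsim \mu N'$ with $\mu\approx 0.2$), you must use her sharper estimate based on $\sup_\xi|\mathcal L(e^{-i\xi})\mathcal L(e^{-2i\xi})|^{1/2}$ (or the bound $P_{N'}(y)\le P_{N'}(3/4)$-type analysis on dyadic blocks), not the plain sup-norm of $\mathcal L$. Either repair yields the qualitative statement of the lemma, which is all the paper needs; as it stands, though, the justification given for the decisive inequality is false.
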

In this paper we will consider the case $d=2n$ and  write $\nnu=(\nu_1,\nu_2)\in\mathbb Z^n\times \mathbb Z^n$. 
We simplify our notations by writing $\Psi^{\la}_{ G,\nnu}(\yyy)=\om_{\nu_1}(y_1)\om_{\nu_2}(y_2)$ for $\yyy=(y_1,y_2)\in (\bbrn)^2$ and by using $\om_{\nnu}$ or just $\om$ for $\Psi_{G,\nnu}^{\la}$ when there is no confusion.

It is known that every $m\in L^2(\bbr^{2n})$ has the representation
$$
m=\sum_{\la=0}^{\infty}\sum_{G\in \II_{\la}}\sum_{\nnu\in (\bbzn)^2}\langle m,\Psi^{\la}_{ G,\nnu}\rangle \Psi^{\la}_{ G,\nnu},
$$
or simply
$m=\sum_{\la}\sum_{\om\in \mathcal D_\la}a_\om \om$ with $a_\om=\langle m,\om\rangle$. Here, $\II_{0}=\II$ and $\II_{\la}=\II'$ for $\la\in\bbn$.
Actually this representation holds for general spaces such as $L^q(\bbr^{2n})$ for $1<q<\infty$. We refer to \cite[Section 3]{Gr_He_Sl2020} for more details.

The following lemma slightly generalizes \cite[Theorem 1.1]{Gr_He_Sl2020}.
 \begin{lemma}\label{05031}
 Let $1\le r<4$ and $j\in\mathbb N$.
 Suppose that $m_0$ is supported in the annulus $\{\xxxi\in (\bbrn)^2: 2^{j-3}\le |\xxxi|\le 2^{j+3}\}$ and its wavelet decomposition $m_0=\sum_{\la}\sum_{\om\in\mathcal D_{\la}}a_\om \om$ satisfies
 \begin{enumerate}
\item $\|  \{a_\om\}_{\om\in\mathcal D_{\la}}\|_{\ell^r}\lesssim 2^{\la C(n,r)}$, 
\item $\| \{a_\om\}_{\om\in\mathcal D_{\la}} \|_{\ell^\nf}\lesssim  B 2^{-\la N}$ with  $B\le 1$ and $N\gg \tf r{4-r}C(n,r)$.
\end{enumerate}
Let $m_k:=m_0(2^{-k}\; \vec{\cdot}\;)$ for $k\in\mathbb Z$, and $m:=\sum_{k\in\mathbb Z}m_k$. 
Then the bilinear multiplier operators $T_m$ associated with $m$ satisfies 
\begin{equation}\label{e05032}
\big\Vert T_m(f_1,f_2)\big\|_{L^1(\rn)}\lesssim \max (jB^{1-\tf r4}, B)\|f_1\|_{L^2(\rn)}\|f_2\|_{L^2(\rn)}.
\end{equation}
\end{lemma}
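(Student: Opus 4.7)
The plan is to adapt the wavelet-based proof of Theorem \ref{thmb} to handle the Calder\'on-reproducing sum $m=\sum_k m_k$. I would decompose $m_0=\sum_{\la\ge 0} m_0^\la$ with $m_0^\la:=\sum_{\om\in \mathcal D_\la} a_\om\,\om$, inducing $m=\sum_\la m^\la$ with $m^\la(\xxxi):=\sum_{k\in\bbz} m_0^\la(2^{-k}\xxxi)$, and derive two complementary estimates on each $\|T_{m^\la}\|_{L^2\times L^2\to L^1}$ whose sums over $\la$ produce the two terms of $\max(jB^{1-r/4},B)$.

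The first, $L^\infty$-trivial estimate uses $\|\om\|_{L^\infty}\sim 2^{\la n}$ and the bounded overlap of wavelets in $\mathcal D_\la$: combined with the $\ell^\infty$-hypothesis it gives $\|m_0^\la\|_{L^\infty}\lesssim B\,2^{-\la(N-n)}$. Since the annular supports of the dilates $m_0^\la(2^{-k}\cdot)$ for different $k$ overlap boundedly, the same pointwise bound persists for $\|m^\la\|_{L^\infty}$; together with the smoothness of $m^\la$ inherited from the $\mathcal C^N$-wavelet structure, a Coifman--Meyer-type bilinear multiplier theorem yields $\|T_{m^\la}\|_{L^2\times L^2\to L^1}\lesssim B\,2^{-\la(N-n)}$. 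Summing over $\la$ for $N>n$ gives $\lesssim B$.

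The second estimate uses the $\ell^r$-hypothesis. Near-disjointness of the wavelets at level $\la$ gives $\|m_0^\la\|_{L^r}\lesssim 2^{\la(C(n,r)+n(1-2/r))}$, while wavelet scaling gives $\|\partial^\alpha m_0^\la\|_{L^\infty}\lesssim B\,2^{\la(n+|\alpha|-N)}$. Applying Theorem \ref{thmb} to the single-scale piece $m_0^\la$ bounds $\|T_{m_0^\la}\|_{L^2\times L^2\to L^1}$ by $(B\,2^{-\la(N-n)})^{1-r/4}\cdot 2^{(r/4)\la(C(n,r)+n(1-2/r))}$, and bilinear dilation invariance extends this uniformly to every $T_{m_0^\la(2^{-k}\cdot)}$. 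To pass from the single-scale bound to $T_{m^\la}=\sum_k T_{m_0^\la(2^{-k}\cdot)}$ I would use a bilinear Littlewood--Paley argument: the frequency support $|\xxxi|\sim 2^{j+k}$ of each dilate forces the active dyadic frequency pairs $(\ell_1,\ell_2)$ of $(f_1,f_2)$ to satisfy $\max(\ell_1,\ell_2)=j+k+O(1)$, so at most $O(j)$ pairs are active per dilate; Cauchy--Schwarz on the frequency indices then extracts the factor $j$. The condition $N\gg rC(n,r)/(4-r)$ makes the resulting $\la$-sum converge, contributing $\lesssim jB^{1-r/4}$.

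The main obstacle is the bilinear Littlewood--Paley step that supplies the factor $j$: at the $L^2\times L^2\to L^1$ endpoint there is no clean bilinear Plancherel identity, so a naive triangle inequality over $k$ diverges. The key is that the thinness of the annulus $|\xxxi|\sim 2^j$ caps the active frequency-pairs per dilate at $O(j)$, which, together with the $L^2$-orthogonality of the Littlewood--Paley projections in each individual variable, lets Cauchy--Schwarz on the frequency indices close the estimate with only a $j$-factor of loss.
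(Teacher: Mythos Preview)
Your outline has a genuine gap that undermines both estimates: you are missing the diagonal/off-diagonal splitting of the frequency annulus, and without it neither route closes. In your second estimate, the claim that the thinness of the annulus $|\xxxi|\sim 2^j$ caps the active frequency pairs per dilate at $O(j)$ is false. The annulus $2^{j-3}\le|\xxxi|\le 2^{j+3}$ allows $|\xi_1|$ (or $|\xi_2|$) to range all the way down to $0$; the constraint $\max(\ell_1,\ell_2)=j+k+O(1)$ places no lower bound on $\min(\ell_1,\ell_2)$, so the number of active dyadic pairs per dilate is unbounded. The $O(j)$ count is only valid in the \emph{diagonal} region $2^{-j}|\xi_1|\le|\xi_2|\le 2^j|\xi_1|$, where both $\ell_1,\ell_2\in[k,j+k+O(1)]$. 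Your first estimate has the complementary defect: a Coifman--Meyer theorem needs H\"ormander--Mihlin bounds $|\xxxi|^{|\alpha|}|\partial^\alpha m^\la(\xxxi)|\lesssim C$, but the level-$\la$ wavelets oscillate at scale $2^{-\la}$ while sitting at radius $\sim 2^j$, so the $|\alpha|=M$ term contributes $2^{jM}B\,2^{\la(n+M-N)}$ and the $\la$-sum yields $2^{jM}B$, exponential in $j$ rather than the claimed $B$.

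The paper's remedy is precisely this diagonal split: set $\mathcal D_\la=\mathcal D_\la^1\cup\mathcal D_\la^2$ according to whether the wavelet's support meets $\{2^{-j}|\xi_1|\le|\xi_2|\le 2^j|\xi_1|\}$. On the diagonal piece $m^1$ the support lies in $E^j\times E^j$ with $E^j=\{1\le|\xi|\le 2^{j+2}\}$; a single-scale bilinear Plancherel-type bound (\cite[Proposition~2.2]{Paper1}, stated directly in terms of the $\ell^r$ and $\ell^\infty$ norms of the coefficients and thus avoiding the derivative loss you would incur by invoking Theorem~\ref{thmb}) followed by Cauchy--Schwarz over $k$ on the projections $f_{i,k}$ to $2^kE^j$---whose $O(j)$ overlap supplies the factor $j$---gives $jB^{1-r/4}$. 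On the off-diagonal piece $m^2$ (say $|\xi_2|\le 100$) the output $T_{m_k^2}(f_1,f_2)$ has Fourier support in $\{|\xi|\sim 2^{j+k}\}$, so a square-function characterization of $L^1$ handles the $k$-sum; the $\xi_2$-variable is then controlled by the pointwise bound $|L_{\nu_2,k}f_2|\lesssim 2^{\la n/2}\mathcal Mf_2$, and since $|\xi_2|$ is bounded independently of $j$ only $O(2^{\la n})$ values of $\nu_2$ occur, allowing the $\ell^\infty$ hypothesis alone to close this piece at $B$.
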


The proof is essentially contained in \cite{Gr_He_Sl2020}, but for the sake of completeness, we include the proof in the appendix at the end of the paper.
We  also refer to \cite[Theorem 4]{Gr_He_Ho2018} for more related details.

\section{Proof of Theorem \ref{mainthm} : Reduction }\label{reduction}
In this section, following the idea of Duoandikoetxea and Rubio de Francia \cite{Du_Ru}, we shall reduce the proof of Theorem \ref{mainthm} to  operators with smooth kernels. 
We choose a Schwartz function $\Phi$ on $(\bbrn)^2$ such that its Fourier transform $\wh{\Phi}$ is supported in the annulus $\{\xxxi\in (\bbrn)^2: 1/2\le |\xxxi|\le 2\}$ and enjoys the property 
$\sum_{j\in\bbz}\wh{\Phi_j}(\xxxi)=1$ for $\xxxi\not= \0$ where $\wh{\Phi_j}(\xxxi):=\wh{\Phi}(\xxxi/2^j)$.
For $\ga\in\bbz$  let
 $$ K^{\ga}(\yyy):=\wh{\Phi}(2^{\ga}\yyy)K(\yyy), \quad \yyy\in (\bbrn)^2$$
 and then we observe that $K^\ga(\yyy)=2^{2 \ga n} K^0(2^\ga \yyy)$.
 For $\mu\in\bbz$ we define
 \begin{equation*}
K_{\mu}^{\ga}(\yyy):=\Phi_{{\mu}+\ga}\ast K^{\ga}(\yyy)=2^{\ga mn}\big( \Phi_{{\mu}}\ast K^{0}\big)(2^\ga \yyy).
\end{equation*}
Then we have
$$
\wh{K^\ga_\mu}(\xxxi)= \wh{\Phi}(2^{-(\mu+\ga)}\xxxi)\wh{K^0}(2^{-\ga}\xxxi)=\wh{K^0_\mu}(2^{-\ga} \xxxi),
$$
which implies that $\wh{K^\ga_\mu}$ is bounded uniformly in $\ga$ while they have almost disjoint supports, so it is natural to add them together as follows:
 $$K_{\mu}(\yyy):=\sum_{\ga\in \bbz}{K_{\mu}^{\ga}(\yyy)}.$$
 We define
 \begin{equation*}
 \LL_{\mu}\big(f_1,f_2\big)(x):=\int_{(\bbrn)^2}{K_{\mu}(\yyy)f_1(x-y_1)f_2(x-y_2)   } ~ d\yyy, \q x\in\bbrn
 \end{equation*} 
 and write
 \begin{equation*}
 \big\Vert \LL_{\Omega}(f_1,f_2)\big\Vert_{L^p}\le \Big(\sum_{\mu\in\bbz}\big\Vert \LL_{\mu}(f_1,f_2)\big\Vert_{L^p(\bbrn)}^{\min{(1,p)}}\Big)^{\frac{1}{\min{(1,p)}}}.
 \end{equation*}
 It is known in \cite{Gr_He_Ho2018, Paper1, Gr_He_Sl2020} that for all $1<p_1,p_2<\infty$, $1/2<p<\infty$ with $1/p=1/p_1+1/p_2$, and $1<q\le \infty$,
 \begin{equation}\label{basicest}
 \big\Vert \LL_{\mu}(f_1,f_2)\big\Vert_{L^p(\bbrn)}\lesssim \Vert \Omega\Vert_{L^q(\mathbb{S}^{2n-1})}\Vert f_1\Vert_{L^{p_1}(\bbrn)}\Vert f_2\Vert_{L^{p_2}(\bbrn)}
 \begin{cases}
 2^{(2n-\delta)\mu}, & \mu\ge 0\\
 2^{(1-\delta)\mu}, & \mu<0
 \end{cases} 
 \end{equation}
 where $0<\delta<1/q'$, and there exists an integer $\mu_0>0$ such that if $\mu\ge \mu_0$, then 
 \begin{equation}\label{essentialest}
 \big\Vert \LL_{\mu}(f_1,f_2)\big\Vert_{L^1(\bbrn)}\lesssim 2^{-\epsilon_0\mu}\Vert \Omega\Vert_{L^q(\mathbb{S}^{2n-1})}\Vert f_1\Vert_{L^2(\bbrn)}\Vert f_2\Vert_{L^2(\bbrn)}
 \end{equation}
 for some $\epsilon_0>0$.
 It follows from (\ref{basicest}) that
 $$\Big( \sum_{\mu<\mu_0} \big\Vert \LL_{\mu}(f_1,f_2)\big\Vert_{L^p(\bbrn)}^{\min{(1,p)}}\Big)^{\frac{1}{\min{(1,p)}}}\lesssim_{\mu_0} \Vert \Omega\Vert_{L^q(\mathbb{S}^{2n-1})}\Vert f_1\Vert_{L^{p_1}(\bbrn)}\Vert f_2\Vert_{L^{p_2}(\bbrn)}$$
 and thus it remains to show that for $\mu\ge \mu_0$ there exists a small constant $\delta_0>0$, possibly depending on $q,p_1,p_2$, such that
 \begin{equation}\label{mainidea}
 \big\Vert \LL_{\mu}(f_1,f_2)\big\Vert_{L^p(\bbrn)}\lesssim_{\delta_0}2^{-\delta_0 \mu} \Vert \Omega\Vert_{L^q(\mathbb{S}^{2n-1})}\Vert f_1\Vert_{L^{p_1}(\bbrn)}\Vert f_2\Vert_{L^{p_2}(\bbrn)},
 \end{equation}
 which clearly concludes
  $$\Big(\sum_{\mu\ge \mu_0} \big\Vert \LL_{\mu}(f_1,f_2)\big\Vert_{L^p(\bbrn)}^{\min{(1,p)}}\Big)^{\frac{1}{\min{(1,p)}}}\lesssim_{\mu_0,\delta_0} \Vert \Omega\Vert_{L^q(\mathbb{S}^{2n-1})}\Vert f_1\Vert_{L^{p_1}(\bbrn)}\Vert f_2\Vert_{L^{p_2}(\bbrn)}.$$
 
 The proof of (\ref{mainidea}) will be divided into three parts based on the region where the index $p$ is located;
\begin{equation*}
\begin{array}{ccc}
\text{Region $\mathrm{I}$} &: &1<p<\infty \\
\text{Region $\mathrm{II}$} &: & 1/2<p<1  \\
\text{Region $\mathrm{III}$} &: & p=1
\end{array}
\end{equation*}
 \begin{figure}[h]\label{fig2}
\begin{tikzpicture}
\path[fill=red!10] (0,0)--(0,2)--(2,0)--(0,0);
\draw [<->] (0,2.5)--(0,0)--(2.5,0);
\draw[dash pattern= { on 2pt off 1pt}] (0,2)--(2,0);
\node [below left] at (0,0) {\tiny$(0,0)$};
\node [below] at (2,0) {\tiny$(1,0)$};
\node [left] at (0,2) {\tiny$(0,1)$};
\node [below left] at (1.2,0.9) {\tiny$q>\frac{4}{3}$};
\node [right] at (2.5,0) {\tiny${\frac{1}{p_1}}$};
\node [above] at (0,2.5) {\tiny${\frac{1}{p_2}}$};
\node [below] at (1.3,-1) {Region $\mathrm{I}$};
\draw (0,0) circle [radius=0.04];
\draw (2,0) circle [radius=0.04];
\draw (0,2) circle [radius=0.04];
\path[fill=blue!10] (4.5,2)--(5.3,2)--(6.5,0.8)--(6.5,0)--(4.5,2);
\path[fill=blue!10] (6.5,2)--(6.5,0.8)--(5.3,2)--(6.5,2);
\draw [<->] (4.5,2.5)--(4.5,0)--(7,0);
\draw[dash pattern= { on 2pt off 1pt}] (4.5,2)--(6.5,2)--(6.5,0)--(4.5,2);
\draw[-] (5.3,2)--(6.5,0.8);
\draw[-] (6,1.3)--(7,1.6);
\node [right] at (7,1.6) {\tiny$q>\max{(\frac{4}{3},\frac{p}{2p-1})}$};
\node [right] at (8.15,1.2) {\tiny$\veq$};
\node [right] at (7.55,0.8) {\tiny$\max{(\frac{4}{3},\frac{p_0}{p_0-1})}$};

\node [left] at (4.5,2) {\tiny$(0,1)$};
\node [below left] at (4.5,0) {\tiny$(0,0)$};
\node [above] at (5.3,2) {\tiny$(\frac{1}{p_0},1)$};
\node [below] at (6.5,0) {\tiny$(1,0)$};
\node [right] at (6.4,0.8) {\tiny$(1,\frac{1}{p_0})$};
\node [right] at (6.5,2) {\tiny$(1,1)$};
\node [right] at (7,0) {\tiny${\frac{1}{p_1}}$};
\node [above] at (4.5,2.5) {\tiny${\frac{1}{p_2}}$};
\node [below] at (5.5,-1) {Region $\mathrm{II}$};

\draw [<->] (10,2.5)--(10,0)--(12.5,0);
\draw[-] (10,2)--(12,0);
\node [below left] at (10,0) {\tiny$(0,0)$};
\node [below] at (12,0) {\tiny$(1,0)$};
\node [left] at (10,2) {\tiny$(0,1)$};
\draw[-] (10.7,1.3)--(11.7,1.6);
\node [right] at (11.7,1.6) {\tiny $q>\frac{4}{3}$};
\node [below] at (11.3,-1) {Region $\mathrm{III}$};
\end{tikzpicture}
\caption{Regions $\mathrm{I}$, $\mathrm{II}$, and $\mathrm{III}$}
\end{figure}

Since the result in Region $\mathrm{III}$ can be obtained from  interpolation between the other two cases,
it is enough to deal only with indices $(1/p_1,1/p_2,1/p)$ in Regions $\mathrm{I}$ and $\mathrm{II}$.
This shall be done in the next two sections.

 \section{Proof of (\ref{mainidea}) in Region $\mathrm{I}$}\label{proofregion1}

As mentioned in  \cite[Lemma 6.4]{Gr_He_Sl2020}, using the argument in the proof of \cite[Corollary 4.1]{Du_Ru}, we can obtain
 \begin{equation}\label{1est}
 \big| \wh{K^{\ga}}(\xxxi)\big|\lesssim \Vert \Omega\Vert_{L^q(\mathbb{S}^{2n-1})}\min{\big( |2^{-\ga}\xxxi|, |2^{-\ga}\xxxi|^{-\delta}\big)}
 \end{equation}
 and
 \begin{equation}\label{2est}
 \big| \partial^{\alpha}\wh{K^{\ga}}(\xxxi)\big|\lesssim \Vert \Omega\Vert_{L^q} 2^{-\ga|\alpha|}\min{\big(1,|2^{-\ga}\xxxi|^{-\delta} \big)}
 \end{equation}
 for $q>1$, all multi-indices $\alpha$, and  $\delta$ satisfying $0<\delta<\frac{1}{2q'}$.
 
 The same estimates also hold for $\wh{K_{\mu}^{\ga}}$, whenever $\mu\ge \mu_0$, as follows:
 \begin{lemma}\label{lemmas}
 Let $\mu\ge \mu_0$, $\ga\in\bbz$, and $1<q\le \infty$. 
 Then we have
 \begin{equation*}
 \big| \wh{K_{\mu}^{\ga}}(\xxxi)\big|\lesssim \Vert \Omega\Vert_{L^q(\mathbb{S}^{2n-1})}\min{\big( |2^{-\ga}\xxxi|, |2^{-\ga}\xxxi|^{-\delta}\big)}
 \end{equation*}
 and
 \begin{equation}\label{2ndassertion}
 \big| \partial^{\alpha}\wh{K_{\mu}^{\ga}}(\xxxi)\big|\lesssim_{\mu_0,\alpha} \Vert \Omega\Vert_{L^q(\mathbb{S}^{2n-1})} 2^{-\ga|\alpha|}\min{\big(1,|2^{-\ga}\xxxi|^{-\delta} \big)}
 \end{equation}
  uniformly in $\mu\ge \mu_0$,
  for all multi-indices $\alpha$ and all $\delta$ satisfying $0<\delta<\frac{1}{2q'}$.
 \end{lemma}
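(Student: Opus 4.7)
The plan is to transfer the bounds \eqref{1est}--\eqref{2est} for $\widehat{K^\gamma}$ directly to $\widehat{K^\gamma_\mu}$, exploiting the fact that on the Fourier side $K^\gamma_\mu = \Phi_{\mu+\gamma} \ast K^\gamma$ becomes a simple product
\begin{equation*}
\widehat{K^\gamma_\mu}(\xxxi) \;=\; \widehat{\Phi_{\mu+\gamma}}(\xxxi)\,\widehat{K^\gamma}(\xxxi) \;=\; \widehat{\Phi}\bigl(2^{-(\mu+\gamma)}\xxxi\bigr)\,\widehat{K^\gamma}(\xxxi).
\end{equation*}
Since $\widehat{\Phi}$ is a fixed Schwartz function supported in $\{1/2 \le |\xxxi| \le 2\}$, the cutoff $\widehat{\Phi}(2^{-(\mu+\gamma)}\xxxi)$ is uniformly bounded in $\mu$ and $\gamma$ and localizes the spectrum to $|\xxxi|\sim 2^{\mu+\gamma}$, i.e.\ $|2^{-\gamma}\xxxi|\sim 2^\mu \ge 2^{\mu_0}$.

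For the first inequality, I would simply multiply \eqref{1est} by the uniform $L^\infty$ bound on $\widehat{\Phi}$. Observe that on the support of the cutoff, $|2^{-\gamma}\xxxi|\ge 1$, so the $\min$ in \eqref{1est} equals $|2^{-\gamma}\xxxi|^{-\delta}$; this is obviously bounded above by $\min(|2^{-\gamma}\xxxi|,|2^{-\gamma}\xxxi|^{-\delta})$, giving the claim.

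For the derivative estimate, the Leibniz rule gives
\begin{equation*}
\partial^\alpha \widehat{K^\gamma_\mu}(\xxxi) \;=\; \sum_{\beta \le \alpha}\binom{\alpha}{\beta}\,\partial^\beta\!\bigl[\widehat{\Phi}(2^{-(\mu+\gamma)}\xxxi)\bigr]\,\partial^{\alpha-\beta}\widehat{K^\gamma}(\xxxi).
\end{equation*}
Each derivative of the cutoff produces a factor of $2^{-(\mu+\gamma)|\beta|}$ against a uniformly bounded Schwartz function, and \eqref{2est} handles $\partial^{\alpha-\beta}\widehat{K^\gamma}$. The key bookkeeping is that, since $\mu \ge \mu_0 > 0$, we have
\begin{equation*}
2^{-(\mu+\gamma)|\beta|}\cdot 2^{-\gamma|\alpha-\beta|} \;\le\; 2^{-\gamma|\alpha|},
\end{equation*}
so after summing over $\beta \le \alpha$ one obtains precisely the prefactor $2^{-\gamma|\alpha|}$ appearing in \eqref{2ndassertion}, with an implied constant depending on $\alpha$ and $\mu_0$ but not on $\mu$. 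The $\min(1,|2^{-\gamma}\xxxi|^{-\delta})$ factor is inherited from \eqref{2est}.

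There is no real obstacle here; the lemma is essentially a routine spectral-localization argument. The only subtlety worth flagging is the monotonicity step $2^{-(\mu+\gamma)|\beta|}\le 2^{-\gamma|\beta|}$, which relies on the assumption $\mu \ge \mu_0 \ge 0$ and is exactly what allows the bound to be uniform in $\mu$.
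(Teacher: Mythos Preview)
Your argument is correct and follows essentially the same route as the paper: write $\widehat{K^\gamma_\mu}=\widehat{\Phi}(2^{-(\mu+\gamma)}\cdot)\,\widehat{K^\gamma}$, invoke \eqref{1est} directly for the first bound, and for the second apply the Leibniz rule together with \eqref{2est}, using $2^{-(\mu+\gamma)|\beta|}\le 2^{-\gamma|\beta|}$ (valid since $\mu\ge\mu_0>0$) to absorb the extra powers coming from derivatives of the cutoff. The paper's proof is identical in substance, only recording separately the cases $\alpha_2=0$ and $\alpha_2\ne 0$ before noting that on the support $\{|2^{-\gamma}\xxxi|\sim 2^{\mu}\}$ both reduce to the same thing.
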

 \begin{proof}
 We note that
 $$\wh{K_{\mu}^{\ga}}(\xxxi)=\wh{\Phi_{\mu+\ga}}(\xxxi)\wh{K^{\ga}}(\xxxi)$$
 and thus the first assertion immediately follows from the estimate (\ref{1est}).
 To verify the second one, we see that
 \begin{equation*}
 \big|\partial^{\alpha}\wh{K_{\mu}^{\ga}}(\xxxi)\big|\lesssim \sum_{\alpha_1+\alpha_2=\alpha} \big| \partial^{\alpha_1}\wh{\Phi_{\mu+\ga}}(\xxxi)\big| \big|\partial^{\alpha_2}\wh{K^{\ga}}(\xxxi)\big|.
 \end{equation*}
 Due to the support of $\wh{\Phi}$, we have
 $$\big|\partial^{\alpha_1}\wh{\Phi_{\mu+\ga}}(\xxxi)\big|=2^{-|\alpha_1|(\mu+\ga)}\big|\partial^{\alpha_1}\wh{\Phi}(\xxxi/2^{\mu+\ga})\big|\lesssim_{\mu_0}2^{-|\alpha_1|\ga}\chi_{|\xxxi|\sim 2^{\mu+\ga}}(\xxxi)$$
 and the estimates (\ref{1est}) and (\ref{2est}) imply that
 \begin{equation*}
 \big|\partial^{\alpha_2}\wh{K^{\ga}}(\xxxi)\big|\lesssim \Vert \Omega\Vert_{L^q(\mathbb{S}^{2n-1})} 2^{-\ga|\alpha_2|}
 \begin{cases}
  \min{\big(|2^{-\ga}\xxxi|, | 2^{-\ga}\xxxi |^{-\delta} \big)}, & \alpha_2= 0\\
 \min{\big(1, | 2^{-\ga}\xxxi|^{-\delta} \big)}, & \alpha_2\not= 0
 \end{cases}.
 \end{equation*}
 If $|\xxxi|\sim 2^{\ga+\mu}$ for $\mu\ge \mu_0$, then 
 $$\min{\big(1,|2^{-\ga}\xxxi|^{-\delta} \big)}=|2^{-\ga}\xxxi|^{-\delta}\le \min{\big(|2^{-\ga}\xxxi|, |2^{-\ga}\xxxi|^{-\delta} \big)}$$
 and finally, combining all together, we obtain (\ref{2ndassertion}).
 \end{proof}

We now generalize the estimate (\ref{essentialest}) by using Lemma \ref{lemmas}.
   \begin{proposition}\label{05032}
 Let $2\le p_1,p_2\le\nf$ and $1\le p\le 2$ with $1/p=1/p_1+1/p_2$. 
 Suppose that $4/3<q\le \infty$ and $\mu\ge \mu_0$.
 Then we have  
\begin{equation}\label{2i2}
 \|\mathcal L_\mu(f_1,f_2)\|_{L^p(\bbrn)}\lesssim  2^{-\mu\ep_0}\|\Om\|_{L^q(\mathbb S^{2n-1})}\|f_1\|_{L^{p_1}(\rn)}\|f_2\|_{L^{p_2}(\rn)}.
 \end{equation}
 for some $\ep_0>0$ .
 \end{proposition}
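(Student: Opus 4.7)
My plan is to leverage the $L^2\times L^2\to L^1$ bound \eqref{essentialest} to reach the full triangle $2\le p_1,p_2\le\infty$, $1\le p\le 2$ via duality and bilinear interpolation. Specifically, I aim to establish endpoint estimates at the three vertices $(1/p_1,1/p_2)=(1/2,1/2)$, $(0,1/2)$, $(1/2,0)$, each carrying exponential decay in $\mu$, and then interpolate via Lemma \ref{interpollemma}.

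The remaining work is to establish the $L^2\times L^2\to L^1$ boundedness, with $2^{-\mu\epsilon_0'}$ decay for some $\epsilon_0'>0$, of the two transpose operators $\mathcal{L}_\mu^{*1}$ and $\mathcal{L}_\mu^{*2}$ defined through the standard trilinear duality $\int \mathcal{L}_\mu(f_1,f_2)\,h\,dx = \int \mathcal{L}_\mu^{*1}(h,f_2)f_1\,dx = \int \mathcal{L}_\mu^{*2}(f_1,h)f_2\,dx$. Their Fourier multipliers are obtained from $\widehat{K_\mu}$ by the invertible linear substitutions $(\xi_1,\xi_2)\mapsto(-\xi_1-\xi_2,\xi_2)$ and $(\xi_1,\xi_2)\mapsto(\xi_1,-\xi_1-\xi_2)$, respectively. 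These substitutions preserve the annular Fourier localization of each $\gamma$-piece (up to universal dilation constants) as well as the derivative bounds of Lemma \ref{lemmas}, so Lemma \ref{05031} may be applied to the transformed multipliers in exactly the same way as it was applied in the proof of \eqref{essentialest}. Dualizing the resulting $L^2\times L^2\to L^1$ bounds then yields
$$
\|\mathcal{L}_\mu(f_1,f_2)\|_{L^2(\bbrn)}\lesssim 2^{-\mu\epsilon_0'}\|\Omega\|_{L^q(\mathbb{S}^{2n-1})}\|f_1\|_{L^\infty(\bbrn)}\|f_2\|_{L^2(\bbrn)}
$$
and the symmetric estimate with the roles of $f_1$ and $f_2$ interchanged.

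With the three vertex estimates in hand, all decaying like $2^{-\mu\min(\epsilon_0,\epsilon_0')}$, the bilinear Marcinkiewicz interpolation theorem (Lemma \ref{interpollemma}), applied iteratively to the triangle with vertices $(1/2,1/2)$, $(0,1/2)$, $(1/2,0)$, produces the strong-type bound at every interior index, which is precisely the range $2\le p_1,p_2\le\infty$, $1\le p\le 2$ with $1/p=1/p_1+1/p_2$. The exponential decay survives because any convex combination of strictly positive decay rates is itself strictly positive, giving the desired $2^{-\mu\epsilon_0}$ with $\epsilon_0>0$ depending on $(p_1,p_2)$.

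The principal technical hurdle will be checking that the wavelet decomposition conditions $(1)$ and $(2)$ of Lemma \ref{05031} transfer cleanly to the multipliers of $\mathcal{L}_\mu^{*1}$ and $\mathcal{L}_\mu^{*2}$. Since Lemma \ref{05031} takes wavelet-coefficient bounds as its hypothesis rather than direct $L^r$ or $\mathscr{C}^N$ bounds, one must either redo the Daubechies expansion for the transformed multipliers, using that the substitutions are bounded linear bijections preserving $L^r$ norms up to Jacobians and preserving smoothness up to uniform constants, or revisit the proof of Lemma \ref{05031} given in the appendix and observe that only the annular Fourier support together with the $L^r$ and $\mathscr{C}^N$ controls of the multiplier are genuinely used; both transfer immediately under the adjoint substitution.
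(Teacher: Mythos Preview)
Your proposal is correct and follows essentially the same route as the paper: reduce by interpolation to the vertices $(1/2,1/2)$, $(1/2,0)$, $(0,1/2)$, use the already-established \eqref{essentialest} at $(1/2,1/2)$, and obtain the other two vertices by dualizing to $\mathcal{L}_\mu^{*2}$ (resp.\ $\mathcal{L}_\mu^{*1}$) and applying Lemma \ref{05031} to the transformed multiplier $M_\mu(\xi_1,\xi_2)=\widehat{K_\mu^0}(\xi_1,-(\xi_1+\xi_2))$. The paper carries out exactly the verification you flag as the ``principal technical hurdle'': condition (1) of Lemma \ref{05031} comes from $\|M_\mu\|_{L^{q'}}=\|\widehat{K_\mu^0}\|_{L^{q'}}\lesssim\|K_\mu^0\|_{L^q}\lesssim\|\Omega\|_{L^q}$ via Hausdorff--Young, and condition (2) comes from the derivative bounds of Lemma \ref{lemmas} transferred through the linear substitution.
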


The following two propositions are end-point estimates for $\LL_{\mu}$, which will be finally interpolated with (\ref{2i2}).
 \begin{proposition}\label{1i1}
 Let $\delta>0$ and $\mu\ge \mu_0$. Suppose that $4/3<q\le \infty$.
 Then we have
 \begin{equation*}
 \big\Vert \LL_{\mu}(f_1,f_2)\big\Vert_{L^{1,\infty}(\bbrn)}\lesssim_{\delta} 2^{\delta \mu}\Vert \Omega\Vert_{L^{q}(\mathbb{S}^{2n-1})}\Vert f_1\Vert_{L^{1}(\bbrn)}\Vert f_2\Vert_{L^{\infty}(\bbrn)}
 \end{equation*}
 and
  \begin{equation*}
 \big\Vert \LL_{\mu}(f_1,f_2)\big\Vert_{L^{1,\infty}(\bbrn)}\lesssim_{\delta} 2^{\delta \mu}\Vert \Omega\Vert_{L^{q}(\mathbb{S}^{2n-1})}\Vert f_1\Vert_{L^{\infty}(\bbrn)}\Vert f_2\Vert_{L^{1}(\bbrn)}.
 \end{equation*}
 \end{proposition}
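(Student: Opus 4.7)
By the symmetry of the two inequalities (swapping the roles of $f_1$ and $f_2$), I would prove only the first. By homogeneity I may assume $\|\Omega\|_{L^q(\mathbb{S}^{2n-1})}=\|f_2\|_{L^\infty}=1$, so that for each $\alpha>0$ the target becomes $|\{x:|\LL_\mu(f_1,f_2)(x)|>\alpha\}|\lesssim_{\delta} 2^{\delta\mu}\|f_1\|_{L^1}/\alpha$. The natural approach is a Calder\'on--Zygmund decomposition of $f_1$ at height $\alpha$, producing $f_1=g+b$ with $b=\sum_{j} b_j$, each $b_j$ supported on a cube $Q_j$ of side $\ell_j=2^{-k_j}$, $\int b_j=0$, $\|b_j\|_{L^1}\lesssim\alpha|Q_j|$, $\|g\|_{L^\infty}\lesssim\alpha$, $\|g\|_{L^2}^{2}\lesssim\alpha\|f_1\|_{L^1}$, and $\sum_j|Q_j|\lesssim\|f_1\|_{L^1}/\alpha$. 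Setting $\Omega^{*}:=\bigcup_j Q_j^{*}$ one then has $|\Omega^{*}|\lesssim\|f_1\|_{L^1}/\alpha$, and the proposition splits into a good-part estimate, a trivial contribution from $\Omega^{*}$, and the bad-part estimate on $(\Omega^{*})^c$.

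The good term is handled by invoking Proposition~\ref{05032} with $(p_1,p_2,p)=(2,\infty,2)$: it yields $\|\LL_\mu(g,f_2)\|_{L^2}\lesssim 2^{-\epsilon_0\mu}\|g\|_{L^2}$, so Chebyshev gives $|\{|\LL_\mu(g,f_2)|>\alpha/2\}|\lesssim 2^{-2\epsilon_0\mu}\|f_1\|_{L^1}/\alpha$, which is in fact much stronger than needed. The contribution of the bad term inside $\Omega^{*}$ is absorbed into the bound on $|\Omega^{*}|$. Hence the proposition reduces to proving $\int_{(\Omega^{*})^c}|\LL_\mu(b,f_2)(x)|\,dx\lesssim 2^{\delta\mu}\|f_1\|_{L^1}$, which by the triangle inequality and disjointness of the $Q_j$ reduces further to a per-cube estimate $\int_{(Q_j^{*})^c}|\LL_\mu(b_j,f_2)|\,dx\lesssim 2^{\delta\mu}\|b_j\|_{L^1}$.

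To bound the per-cube integral I would decompose $\LL_\mu=\sum_{\gamma\in\bbz}\LL_\mu^{\gamma}$ and split the $\gamma$-sum at $\gamma=k_j$. In the regime $\gamma>k_j$, the Schwartz tail of $\Phi_{\mu+\gamma}$ forces $K_\mu^{\gamma}$ to be essentially concentrated at scale $2^{-\gamma}<\ell_j$, and a pointwise estimate of the form $|K_\mu^{\gamma}(\yyy)|\lesssim_N 2^{(2n-N)(\mu+\gamma)}|\yyy|^{-N}$ away from the concentration annulus yields a negligible contribution $O_N(2^{-N\mu}\|b_j\|_{L^1})$ on $(Q_j^{*})^c$. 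In the regime $\gamma\le k_j$ I would use $\int b_j=0$ to write
\[
\LL_\mu^{\gamma}(b_j,f_2)(x)=\iint\bigl[K_\mu^{\gamma}(x-z_1,x-z_2)-K_\mu^{\gamma}(x-c_j,x-z_2)\bigr]b_j(z_1)f_2(z_2)\,dz_1\,dz_2,
\]
whose $L^1(dx)$-norm is controlled by $\ell_j$ times a translation modulus of $K_\mu^{\gamma}$ in its first variable.

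The main obstacle lies precisely in this cancellation regime: the naive gradient bound $\|\nabla_{y_1}K_\mu^{\gamma}\|_{L^1}\lesssim 2^{\mu+\gamma}\|\Omega\|_{L^q}$, summed over $\gamma\le k_j$, only yields $\lesssim 2^{\mu}\|b_j\|_{L^1}$, corresponding to $\delta=1$, which is not enough. To sharpen this to the required $2^{\delta\mu}$ for arbitrary $\delta>0$, the plan is to couple the $L^1$-gradient estimate with the Fourier-side decay $|\wh{K^0}(\xi)|\lesssim|\xi|^{-\delta'}$ supplied by Lemma~\ref{lemmas} (valid for any $0<\delta'<1/(2q')$), interpolating via Cauchy--Schwarz across the essential annular support of the kernel difference. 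This saves a factor of $2^{-(1-\delta)\mu}$ per term, so that the geometric $\gamma$-sum collapses to the desired $2^{\delta\mu}$ growth with $\delta$ chosen as small as we please. Combining the good part, the trivial contribution on $\Omega^{*}$, and the bad part on $(\Omega^{*})^c$ then completes the proof of the weak-type bound.
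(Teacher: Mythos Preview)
Your scaffolding (Calder\'on--Zygmund decomposition of $f_1$, handling the good part via the $L^2\times L^\infty\to L^2$ bound of Proposition~\ref{05032}, removing the exceptional set, splitting the $\gamma$-sum at the cube scale) matches the paper's proof, and your treatment of the regime $2^{\gamma}\ell(Q)\ge1$ is correct in spirit. The gap is in the cancellation regime $2^{\gamma}\ell(Q)<1$.

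You correctly see that the naive gradient bound on the kernel difference gives only $2^{\mu}$ after summation, but the fix you propose---coupling with the Fourier-side decay $|\widehat{K_\mu^\gamma}(\xi)|\lesssim 2^{-\delta'\mu}$ from Lemma~\ref{lemmas} and ``interpolating via Cauchy--Schwarz across the essential annular support''---does not work. The kernel $K_\mu^\gamma$ is, up to Schwartz tails, spatially supported on an annulus of measure $\sim 2^{-2n\gamma}$, while $\widehat{K_\mu^\gamma}$ is supported where $|\xxxi|\sim 2^{\mu+\gamma}$, a set of measure $\sim 2^{2n(\mu+\gamma)}$. Passing from an $L^2$ Fourier estimate to the needed spatial $L^1$ estimate via Cauchy--Schwarz therefore costs a factor $2^{n\mu}$, which overwhelms the $2^{-\delta'\mu}$ gain (recall $\delta'<1/(2q')<1$). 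No rearrangement of this scheme recovers $2^{\delta\mu}$ with $\delta$ small.

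The paper's mechanism is purely spatial and much simpler: after writing $K_\mu^\gamma=\Phi_{\mu+\gamma}\ast K^\gamma$, the kernel difference becomes an integral of $|\Omega(\zzz\,')|$ against the $L^1$-modulus
\[
\Lambda_{\mu+\gamma}(y_1,c_Q,z_1)=\int_{(\bbrn)^2}\bigl|\Phi_{\mu+\gamma}(x-y_1-z_1,y_2)-\Phi_{\mu+\gamma}(x-c_Q-z_1,y_2)\bigr|\,dx\,dy_2.
\]
For this quantity one has two bounds: the trivial one $\Lambda_{\mu+\gamma}\le 2\|\Phi_{\mu+\gamma}\|_{L^1}\lesssim 1$, and the mean-value one $\Lambda_{\mu+\gamma}\lesssim 2^{\mu+\gamma}\ell(Q)$. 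Taking the geometric mean with weights $1-\delta$ and $\delta$ gives $\Lambda_{\mu+\gamma}\lesssim_\delta 2^{\delta\mu}(2^{\gamma}\ell(Q))^{\delta}$, and the sum over $\gamma$ with $2^{\gamma}\ell(Q)<1$ is geometric. Lemma~\ref{lemmas} plays no role here; it is used only in the proof of Proposition~\ref{05032}. Replacing your Fourier-interpolation step by this trivial/gradient averaging closes the argument.
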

 
\begin{proposition}\label{iii}
Let $\delta>0$ and $\mu\ge \mu_0$. Suppose that $4/3<q\le \infty$.
 Then we have
   \begin{equation*}
 \big\Vert \LL_{\mu}(f_1,f_2)\big\Vert_{BMO(\bbrn)}\lesssim_{\delta} 2^{\delta \mu}\Vert \Omega\Vert_{L^{q}(\mathbb{S}^{2n-1})}\Vert f_1\Vert_{L^{\infty}(\bbrn)}\Vert f_2\Vert_{L^{\infty}(\bbrn)}.
 \end{equation*}
\end{proposition}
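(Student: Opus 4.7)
My approach is the standard $L^\infty\times L^\infty\to BMO$ scheme for bilinear singular integrals, combining the $L^2$ decay from Proposition \ref{05032} with the Fourier-side kernel estimates from Lemma \ref{lemmas}. Fix a cube $Q$ centered at $x_0$ with side length $R$, and set $Q^*:=10^2\sqrt{n}Q$. Split each $f_i=f_i^0+f_i^\infty$ with $f_i^0:=f_i\chi_{Q^*}$, and define
\[
a_Q := \sum_{(i,j)\in\{0,\infty\}^2\setminus\{(0,0)\}}\LL_\mu(f_1^i, f_2^j)(x_0),
\]
so it suffices to bound $\frac{1}{|Q|}\int_Q|\LL_\mu(f_1^0, f_2^0)(x)|\,dx$ together with the three averages $\frac{1}{|Q|}\int_Q|\LL_\mu(f_1^i, f_2^j)(x)-\LL_\mu(f_1^i, f_2^j)(x_0)|\,dx$ for $(i,j)\neq(0,0)$, each by $\lesssim 2^{\delta\mu}\|\Omega\|_{L^q(\mathbb S^{2n-1})}\|f_1\|_{L^\infty}\|f_2\|_{L^\infty}$.

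The local-local term is handled directly by Proposition \ref{05032} at $(p_1,p_2,p)=(2,2,1)$:
\[
\frac{1}{|Q|}\int_Q|\LL_\mu(f_1^0, f_2^0)|\,dx \le \frac{1}{|Q|}\big\|\LL_\mu(f_1^0, f_2^0)\big\|_{L^1(\bbrn)} \lesssim 2^{-\mu\epsilon_0}\|\Omega\|_{L^q(\mathbb S^{2n-1})}\|f_1\|_{L^\infty}\|f_2\|_{L^\infty},
\]
which is stronger than needed. For each of the three remaining differences, decompose the kernel $K_\mu=\sum_\gamma K_\mu^\gamma$ into its single-scale pieces and split the sum at the cube scale $k_0:=\lceil\log_2(1/R)\rceil$. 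For large scales $\gamma<k_0$ (kernel smooth at scale $2^{-\gamma}>R$), apply the mean value theorem with the pointwise gradient bound on $K_\mu^\gamma$ derived from Lemma \ref{lemmas} via Fourier inversion and integration by parts. For small scales $\gamma\ge k_0$ (kernel localized at scale $2^{-\gamma}\le R$), use the Schwartz-type decay of $K_\mu^\gamma$ outside $|\yyy|\lesssim 2^{-\gamma}$ to show that the interaction with the far parts $f_i^\infty$, supported in $(Q^*)^c$, is negligible up to rapidly decaying tails.

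The main obstacle is obtaining the arbitrarily slow growth $2^{\delta\mu}$. A direct H\"ormander-type integration using the gradient bound $|\nabla K_\mu^\gamma(\yyy)|\lesssim 2^{-\mu\delta_0}\|\Omega\|_{L^q(\mathbb S^{2n-1})}2^{(2n+1)(\mu+\gamma)}(1+2^\gamma|\yyy|)^{-N}$ (with $\delta_0\in(0,1/(2q'))$ from Lemma \ref{lemmas}) yields only the polynomial growth $2^{(2n+1-\delta_0)\mu}$, far too crude. The resolution requires a careful scale-by-scale balance between kernel smoothness and the $L^2\times L^\infty\to L^2$ decay from Proposition \ref{05032}: for the scales where the MVT bound is costly, one uses instead an $L^2$-based averaging that leverages the Proposition \ref{05032} estimate, so that upon summation the polynomial factors in $2^\mu$ generated by the kernel volume and gradient are absorbed by the $2^{-\mu\delta_0}$ Fourier decay factor. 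A conceptually cleaner alternative is to argue by duality: one proves that the first formal adjoint of $\LL_\mu$ maps $H^1\times L^\infty\to L^1$ with bound $2^{\delta\mu}\|\Omega\|_{L^q(\mathbb S^{2n-1})}$ by atomic decomposition, using the $L^2\times L^\infty\to L^2$ bound on each atom's neighborhood together with the cancellation $\int a=0$ and the scale-by-scale kernel regularity of Lemma \ref{lemmas} on the complement.
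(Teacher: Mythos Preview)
Your overall scheme is standard and correct in outline, and you correctly diagnose that inverting the Fourier-side bounds of Lemma~\ref{lemmas} to get a pointwise gradient estimate on $K_\mu^\gamma$ loses a power of $2^\mu$ that cannot be recovered. However, neither of your proposed fixes is what actually resolves the difficulty, and both remain vague at precisely the decisive step.

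The missing idea is to exploit the \emph{physical-space convolution structure} $K_\mu^\gamma=\Phi_{\mu+\gamma}\ast K^\gamma$ rather than the Fourier-side estimates. The paper writes, for $x\in Q$,
\[
K_\mu^\gamma(x-y_1,x-y_2)-K_\mu^\gamma(c_Q-y_1,c_Q-y_2)=\int K^\gamma(\zzz)\big[\Phi_{\mu+\gamma}(\cdots)-\Phi_{\mu+\gamma}(\cdots)\big]\,d\zzz,
\]
applies the mean value theorem \emph{only to the Schwartz bump} $\Phi_{\mu+\gamma}$, and integrates $|K^\gamma(\zzz)|\sim 2^{2\gamma n}|\Omega(\zzz\,')|$ separately over the shell $|\zzz|\sim 2^{-\gamma}$ to produce $\|\Omega\|_{L^1(\mathbb S^{2n-1})}$. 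This gives the two bounds $\lesssim 1$ (trivially) and $\lesssim 2^{\mu+\gamma}\ell(Q)$ (from the MVT on $\Phi_{\mu+\gamma}$); taking the $\delta$-power of the second and the $(1-\delta)$-power of the first yields exactly $2^{\delta\mu}(2^\gamma\ell(Q))^\delta$, which sums over $2^\gamma\ell(Q)<1$ to $2^{\delta\mu}$. The tail $2^\gamma\ell(Q)\ge 1$ uses only the rapid decay of $\Phi_{\mu+\gamma}$ and gives $O(1)$ with no loss in $\mu$. No interplay with the $L^2$ decay of Proposition~\ref{05032} is needed in the kernel-difference part; that decay is used only for the local piece.

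Two smaller points: the paper's setup is lighter than yours---it splits only $f_1=\chi_{Q^*}f_1+\chi_{(Q^*)^c}f_1$, handling $\LL_\mu(\chi_{Q^*}f_1,f_2)$ by the $L^2\times L^\infty\to L^2$ bound and the complementary piece by the kernel-difference argument above; there is no need to decompose $f_2$ or to treat three cross terms. Your $H^1\times L^\infty\to L^1$ duality alternative would face the identical smoothness issue and require the same convolution trick on the atoms, so it is not genuinely an alternative route.
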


Then (\ref{mainidea}) follows from interpolating  
$$\begin{array}{cl}
L^2\times L^2\to L^1 &\text{  in (\ref{essentialest})},\\
L^2\times L^{\infty}\to L^2,\q L^{\infty}\times L^{2}\to L^2 &\text{  in Proposition \ref{05032}},\\
L^1\times L^{\infty}\to L^{1,\infty},\q L^{\infty}\times L^1\to L^{1,\infty} &\text{  in Proposition \ref{1i1}},\\
L^{\infty}\times L^{\infty}\to BMO &\text{  in Proposition \ref{iii}}
\end{array}$$
because we can fix $\epsilon_0>0$ in the $L^2\times L^2\to L^1$, $L^2\times L^{\infty}\to L^2,\q L^{\infty}\times L^{2}\to L^2$ estimates and choose $\delta>0$ sufficiently small, depending on $p_1,p_2$, in the other estimates.
To be specific, we first obtain
\begin{equation}\label{p1infty}
 \big\Vert \LL_{\mu}(f_1,f_2)\big\Vert_{L^{p_1}(\bbrn)}\lesssim_{\delta} 2^{-\delta_0 \mu}\Vert \Omega\Vert_{L^{q}(\mathbb{S}^{2n-1})}\Vert f_1\Vert_{L^{p_1}(\bbrn)}\Vert f_2\Vert_{L^{\infty}(\bbrn)}
 \end{equation}
 for some $\delta_0>0$ where $1<p_1<\infty$, by choosing $\de>0$ in Proposition~\ref{1i1} and Proposition~\ref{iii} small enough, and
 by using (linear) Marcinkiewicz interpolation method 
 between the boundedness results at $(1/2,0)$ and $(1,0)$, and by applying either (\ref{realinter}) or (\ref{complexinter}) to interpolate the results at $(1/2,0)$ and $(0,0)$.
A similar argument gives \eqref{mainidea} for all $(1/p_1,1/p_2)$ on the boundary of Region I except the points $(0,0)$, $(0,1)$, and $(1,0)$,  where $L^p(\bbrn)$ in (\ref{mainidea}) should be replaced by $L^{p,\infty}(\bbrn)$ if $p=1$.
Applying Lemma \ref{interpollemma} again, we obtain \eqref{mainidea} for all $(1/p_1,1/p_2)$ in the interior of Region I.

 This finishes the proof of (\ref{mainidea}) in Region $\mathrm{I}$.

\hfill

We now prove the above three propositions.

\bpf[Proof of Proposition~\ref{05032}]
  We may assume that $4/3<q\le 2$ since  $\mathbb S^{2n-1}$ is compact.
 Moreover, as the case $(p_1,p_2,p)=(2,2,1)$ has been already established in \eqref{essentialest}, by multilinear interpolation, it suffices to verify \eqref{2i2} for $(p_1,p_2,p)=(2,\nf,2)$ and $(\nf,2,2)$. We will focus on the case when $(p_1,p_2,p)=(2,\nf,2)$ since the other case follows by symmetry.
Therefore, matters reduce to the inequality
\begin{equation*}
\big\Vert \LL_{\mu}(f_1,f_2)\big\Vert_{L^{2}(\bbrn)}\lesssim 2^{-\mu \epsilon_0}\Vert \Omega\Vert_{L^q(\mathbb{S}^{2n-1})}\Vert f_1\Vert_{L^{2}(\bbrn)}\Vert f_2\Vert_{L^{\infty}(\bbrn)},
\end{equation*}
 which is actually equivalent to
 \begin{equation}\label{equivest}
\big\Vert \LL_{\mu}^{*2}(f_1,f_2)\big\Vert_{L^{1}(\bbrn)}\lesssim 2^{-\mu \epsilon_0}\Vert \Omega\Vert_{L^q(\mathbb{S}^{2n-1})}\Vert f_1\Vert_{L^{2}(\bbrn)}\Vert f_2\Vert_{L^2(\bbrn)},
\end{equation}
where $\LL_{\mu}^{*2}$ is the second transpose of $\LL_{\mu}$, defined via
$$\langle \LL_{\mu}^{*2}(f_1,f_2),h\rangle=\langle \LL_{\mu}(f_1,h),f_2\rangle$$
 for all Schwartz functions $h$ on $\bbrn$.
 We note that that $\LL_{\mu}^{*2}$ would be the bilinear multiplier operator $T_{M_{\mu}}$ associated with 
 $$M_{\mu}(\xi_1,\xi_2):=\wh{K^0_\mu}(\xi_1,-(\xi_1+\xi_2)).$$
 We observe that $\big|\big(\xi_1,-(\xi_1+\xi_2)\big)\big|\sim |(\xi_1,\xi_2)|$ and thus $M_{\mu}$ is supported in $B(0,2^{\mu+3})\setminus B(0,2^{\mu-3})$.
 Now let $\om$ be the wavelets that appeared in Lemma \ref{05031} and we define
 $$a_{\om}:=\langle M_{\mu},\om\rangle.$$
 Then we claim that
 \begin{equation}\label{claim1}
 \big\| \{a_\om\}_{\om\in\mathcal D_{\la}}\big\|_{\ell^{q'}}\lesssim 2^{\la n( 2/{q'}-1)} \|\Om\|_{L^q(\mathbb{S}^{2n-1})} 
 \end{equation}
 and
\begin{equation}\label{claim2}
 \big\| \{a_\om\}_{\om\in\mathcal D_{\la}}\big\|_{\ell^\nf}\lesssim 2^{-\mu\de} 2^{-\la N} \|\Om\|_{L^q(\mathbb S^{2n-1})} 
\end{equation}
where $N$ is the number of vanishing moment of $\psi_M$ in Lemma \ref{daubechieslemma},  which can be chosen arbitrarily large as we need. 
Those claims verify the assumptions of Lemma \ref{05031} with $B=2^{-\mu\de}$ and finally (\ref{equivest}) follows from \eqref{e05032}.
 
 Now let us prove the above two claims (\ref{claim1}) and (\ref{claim2}).

It was proved in \cite[(11)]{Gr_He_Sl2020} that
$$ \big\| \{a_\om\}_{\om\in\mathcal D_{\la}}\big\|_{\ell^{q'}}\lesssim 2^{\la n(2/q'-1)}\Vert  M_{\mu}\Vert_{L^{q'}((\bbrn)^2)}$$
and for $4/3<q\le 2$ we have
$$\Vert  M_{\mu}\Vert_{L^{q'}((\bbrn)^2)}= \big\Vert \wh{K_{\mu}^0}\big\Vert_{L^{q'}((\bbrn)^2)}\lesssim \Vert K_{\mu}^0\Vert_{L^q((\bbrn)^2)}\lesssim \Vert \Omega\Vert_{L^q(\mathbb{S}^{2n-1})}$$
where the Hausdorff-Young inequality is applied. This proves (\ref{claim1}).

To show (\ref{claim2}), we apply Lemma \ref{lemmas} and obtain
$$\big| \p^\al \wh{K_{\mu}^{0}}(\xxxi)\big|\lesssim_\al \|\Om\|_{L^q(\mathbb S^{2n-1})}\min{(1, |\xxxi|^{-\delta})}\sim 2^{-\delta \mu} \|\Om\|_{L^q(\mathbb S^{2n-1})} $$
 for $\mu\ge \mu_0$, which further implies that 
$$\big| \p^\al M_{\mu}(\xxxi)\big|\lesssim_\al 2^{-\mu \delta}\|\Om\|_{L^q(\mathbb S^{2n-1})}.$$
Then (\ref{claim2}) follows from  \cite[Lemma 2.1]{Gr_He_Sl2020}.
 \epf

 \begin{proof}[Proof of Proposition \ref{1i1}]
 We are only concerned with the first estimate as a symmetric argument is applicable to the other one. 
 Without loss of generality, we may assume $\Vert f_1\Vert_{L^1(\bbrn)}=\Vert f_2\Vert_{L^{\infty}(\bbrn)}=1$ and $\Vert \Omega\Vert_{L^q(\mathbb{S}^{2n-1})}=1$.
 Then it suffices to show that for all $\delta>0$ and $t>0$
 \begin{equation}\label{weakgoal}
 \Big| \Big\{x\in\bbrn : \big| \LL_{\mu}\big(f_1,f_2\big)(x)\big|>t \Big\}\Big|\lesssim_{\delta}2^{\delta \mu}\frac{1}{t}.
 \end{equation}
 We shall use the Calder\'on-Zygmund decomposition of $f_1$ at height $t$. Then $f_1$ can be expressed as
 $$f_1=g_1+\sum_{Q\in \mathcal{A}}b_{1,Q}$$
 where $\mathcal{A}$ is a subset of disjoint dyadic cubes, $\big| \bigcup_{Q\in\mathcal{A}} Q\big|\lesssim \frac{1}{t}$, $\supp (b_{1,Q})\subset Q$, $\int{b_{1,Q}(y)}dy=0$, $\Vert b_{1,Q}\Vert_{L^1(\bbrn)}\lesssim t |Q|$, and $\Vert g_1\Vert_{L^r(\bbrn)}\lesssim t^{1-1/r}$ for all $1\le r\le \infty$.
 
 The left-hand side of (\ref{weakgoal}) is less than
 \begin{equation*}
 \Big| \Big\{x\in\bbrn : \big| \LL_{\mu}\big(g_1,f_2\big)(x)\big|>\frac{t}{2} \Big\}\Big|+\Big| \Big\{x\in\bbrn : \Big| \LL_{\mu}\Big(\sum_{Q\in\mathcal{A}}b_{1,Q},f_2\Big)(x)\Big|>\frac{t}{2} \Big\}\Big|.
 \end{equation*}
 Using Chebyshev's inequality, the first term is clearly dominated by
 $$\frac{1}{t^2}\big\Vert \LL_{\mu}(g_1,f_2)\big\Vert_{L^2(\bbrn)}^2\lesssim \frac{1}{t^2}\Vert \Omega\Vert_{L^q(\mathbb{S}^{2n-1})}^2\Vert g_1\Vert_{L^2(\bbrn)}^2\Vert f_2\Vert_{L^{\infty}(\bbrn)}^2\lesssim\frac{1}{t}$$
 where the first inequality follows from the $L^2\times L^{\infty}\to L^2$ estimate in (\ref{2i2}).
 Moreover, the remaining term is estimated by the sum of $\big| \cup_{Q\in\mathcal{A}}Q^*\big|$
 and 
 \begin{equation*}
\Gamma_{\mu}:= \Big| \Big\{x\in \Big( \bigcup_{Q\in\mathcal{A}}Q^*\Big)^c : \Big| \LL_{\mu}\Big(\sum_{Q\in\mathcal{A}}b_{1,Q},f_2\Big)(x)\Big|>\frac{t}{2} \Big\}\Big|
 \end{equation*}
 where we recall that $Q^*$ is the concentric dilation of $Q$ with $\ell(Q^*)=10^2\sqrt{n}\ell(Q)$.
 Since $\big| \cup_{Q\in\mathcal{A}}Q^*\big|\lesssim \big| \cup_{Q\in\mathcal{A}}Q\big|\lesssim \frac{1}{t}$, it is sufficient to show that 
 \begin{equation}\label{weakfinalgoal}
 \Gamma_{\mu}\lesssim_{\delta}2^{\delta \mu}\frac{1}{t}.
 \end{equation}
 We apply Chebyshev's inequality to deduce
 \begin{align*}
 \Gamma_{\mu}&\le \frac{1}{t}\Big\Vert \LL_{\mu}\Big(\sum_{Q\in\mathcal{A}}b_{1,Q},f_2 \Big) \Big\Vert_{L^1((\cup_{Q\in\mathcal{A}}Q*)^c)}\le \frac{1}{t}\sum_{Q\in\mathcal{A}}\big\Vert \LL_{\mu}(b_{1,Q},f_2)\big\Vert_{L^1((Q^*)^c)}\\
 &\le \frac{1}{t}\sum_{Q\in\mathcal{A}}\sum_{\ga\in\bbz}\int_{(Q^*)^c}{    \Big| T_{K_{\mu}^{\ga}}\big( b_{1,Q},f_2\big)(x) \Big|       }dx\\
 &\le \frac{1}{t}\sum_{Q\in\mathcal{A}}\sum_{\ga: 2^{\ga}\ell(Q)<1}\cdots+\frac{1}{t}\sum_{Q\in\mathcal{A}}\sum_{\ga: 2^{\ga}\ell(Q)\ge 1}\cdots=:\Gamma_{\mu}^1+\Gamma_{\mu}^2
 \end{align*}
 where $T_{K_{\mu}^{\ga}}$ is the bilinear operator associated with the kernel $K_{\mu}^{\ga}$ so that
 \begin{equation}\label{tkmuga}
  T_{K_{\mu}^{\ga}}\big( b_{1,Q},f_2\big)(x)=\int K_{\mu}^{\ga}(x-y_1,x-y_2)b_{1,Q}(y_1)f_2(y_2)d\yyy.
  \end{equation}

 To estimate $\Gamma_{\mu}^1$, we use the vanishing moment condition of $b_{1,Q}$ and then obtain
 \begin{align}\label{gamma1est}
 &\int_{(Q^*)^c}\big| T_{K_{\mu}^{\ga}}\big( b_{1,Q},f_2\big)(x)\big|dx\nonumber\\
 &\le \int_{(Q^*)^c}\int_{(\bbrn)^2} \big| K_{\mu}^{\ga}(x-y_1,x-y_2)-K_{\mu}^{\ga}(x-c_Q,x-y_2)\big| |b_{1,Q}(y_1)|  |f_2(y_2)|      d\yyy dx\nonumber\\
 &\lesssim \int_{(Q^*)^c}\int_{\yyy\in(\bbrn)^2} \int_{\zzz\in (\bbrn)^2:|\zzz|\sim 2^{-\ga}}2^{2\ga n}\big|\Omega(\zzz\;') \big|\big| \Phi_{\mu+\ga}(x-y_1-z_1,x-y_2-z_2)\nonumber\\
 &\qq \qq \qq\qq\qq \qq -\Phi_{\mu+\ga}(x-c_Q-z_1,x-y_2-z_2)\big| |b_{1,Q}(y_1)|d\zzz d\yyy dx\nonumber\\
 &=\int_{y_1\in Q}|b_{1,Q}(y_1)|{\int_{\zzz\in (\bbrn)^2:|\zzz|\sim 2^{-\ga}} { 2^{2\ga n}|\Omega(\zzz\;')| \Lambda_{\mu+\ga}(y_1,c_Q,z_1)       }      d\zzz  }dy_1
 \end{align}
 where
 $$\Lambda_{\mu+\ga}(y_1,c_Q,z_1):=\int_{(x,y_2)\in (\bbrn)^2} \big|\Phi_{\mu+\ga}(x-y_1-z_1,y_2)-\Phi_{\mu+\ga}(x-c_Q-z_1,y_2) \big|     dx dy_2.$$
 Here, we used a change of variables $x-y_2-z_2\mapsto y_2$ in the identity.
 We first observe that
 $$\Lambda_{\mu+\ga}(y_1,c_Q,z_1)\le 2\int_{(x,y_2)\in (\bbrn)^2}\big| \Phi_{\mu+\ga}(x,y_2)\big|dxdy_2\lesssim 1.$$
 Furthermore, for $y_1\in Q$
 \begin{align*}
 &\Lambda_{\mu+\ga}(y_1,c_Q,z_1)\\
 &\lesssim 2^{\mu+\ga}|y_1-c_Q|\int_{(\bbrn)^2}  \Big( \int_0^1\frac{2^{2(\mu+\ga)n}}{(1+2^{\mu+\ga}|x-sy_1-(1-s)c_Q|+2^{\mu+\ga}|y_2|)^{2n+1}}ds\Big) dx dy_2\\
 &\lesssim 2^{\mu+\ga}\ell(Q).
 \end{align*}
Averaging the two estimates, we obtain 
\begin{equation}\label{lambdaest}
\Lambda_{\mu+\ga}(y_1,c_Q,z_1)\lesssim_{\delta} 2^{\delta\mu} \big(2^{\ga}\ell(Q)\big)^{\delta}.
\end{equation}
 By plugging (\ref{lambdaest}) into (\ref{gamma1est}), 
 \begin{align*}
 \int_{(Q^*)^c}\big| T_{K_{\mu}^{\ga}}\big( b_{1,Q},f_2\big)(x)\big|dx &\lesssim 2^{\delta \mu}\big( 2^{\ga}\ell(Q)\big)^{\delta}\Vert \Omega\Vert_{L^1(\mathbb{S}^{2n-1})}\Vert b_{1,Q}\Vert_{L^1(\bbrn)}\\
 &\le 2^{\delta\mu}\big( 2^{\ga}\ell(Q)\big)^{\delta}\Vert b_{1,Q}\Vert_{L^1(\bbrn)}
 \end{align*}
 and accordingly,
 \begin{equation*}
 \Gamma_{\mu}^1\lesssim 2^{\delta\mu}\frac{1}{t}\sum_{Q\in\mathcal{A}}\Vert b_{1,Q}\Vert_{L^1(\bbrn)}\sum_{\ga:2^{\ga}\ell(Q)<1}\big( 2^{\ga}\ell(Q)\big)^{\delta}\lesssim_{\delta} 2^{\delta\mu}\frac{1}{t} \sum_{Q\in\mathcal{A}} t|Q|\lesssim 2^{\delta\mu}\frac{1}{t}.
 \end{equation*}
 
 Now let us consider $\Gamma_{\mu}^2$. Assume $2^{\ga}\ell(Q)\ge 1$.
 Then
 \begin{align}\label{gamma2est}
& \int_{(Q^*)^c}{\big|T_{K_{\mu}^{\ga}}\big(b_{1,Q},f_2 \big)(x) \big|}dx \nonumber\\
&\lesssim \int_{y_1\in Q}|b_{1,Q}(y_1)|{\int_{\zzz\in (\bbrn)^2:|\zzz|\sim 2^{-\ga}} { 2^{2\ga n}|\Omega(\zzz\;')| \Theta^Q_{\mu+\ga}(y_1,z_1)       }      d\zzz  }dy_1
 \end{align}
 where $$\Theta_{\mu+\ga}^Q(y_1,z_1):= \int_{x\in (Q^*)^c}\int_{y_2\in\bbrn} \big| \Phi_{\mu+\ga}(x-y_1-z_1,y_2)\big|dxdy_2.$$
 Note that
 \begin{align*}
 \Theta_{\mu+\ga}^Q(y_1,z_1)&\lesssim \int_{(Q^*)^c} \frac{2^{(\mu+\ga)n}}{(1+2^{\mu+\ga}|x-y_1-z_1|)^{n+1}}dx \int_{\bbrn}\frac{2^{(\mu+\ga)n}}{(1+2^{\mu+\ga}|y_2|)^{n+1}}dy_2.
 \end{align*}
 It is clear that the second integral is dominated by a constant, and for the other integral we notice that for $x\in (Q^*)^c$, $y_1\in Q$, and $|z_1|\le 2^{-\ga+1}$ with $2^{\ga}\ell(Q)\ge 1$,
 $$|x-y_1-z_1|\gtrsim |x-c_Q|,$$
  which deduces
  \begin{equation}\label{thetaest}
   \Theta_{\mu+\ga}^Q(y_1,z_1)\lesssim \int_{(Q^*)^c}\frac{2^{(\mu+\ga)n}}{(1+2^{\mu+\ga}|x-c_Q|)^{n+1}}dx\lesssim 2^{-\mu} \big(2^{\ga}\ell(Q)\big)^{-1}\le \big(2^{\ga}\ell(Q)\big)^{-1}
  \end{equation} for $\mu\ge \mu_0$.
 Combining (\ref{gamma2est}) and (\ref{thetaest}), we obtain
 $$\int_{(Q^*)^c}{\big|T_{K_{\mu}^{\ga}}\big(b_{1,Q},f_2 \big)(x) \big|}dx\lesssim \big(2^{\ga}\ell(Q)\big)^{-1} \Vert b_{1,Q}\Vert_{L^1(\bbrn)},$$
 which finally proves that
 \begin{equation*}
 \Gamma_{\mu}^2\lesssim \frac{1}{t}\sum_{Q\in\mathcal{A}}\Vert b_{1,Q}\Vert_{L^1(\bbrn)}\sum_{\ga: 2^{\ga}\ell(Q)\ge 1}\big(2^{\ga}\ell(Q)\big)^{-1} \lesssim \frac{1}{t}.
 \end{equation*}
 
 This completes the proof of (\ref{weakfinalgoal}).
 \end{proof}

 \begin{proof}[Proof of Proposition \ref{iii}]
 Without loss of generality, we may assume $\Vert f_1\Vert_{L^{\infty}(\bbrn)}=\Vert f_2\Vert_{L^{\infty}(\bbrn)}=\Vert \Omega\Vert_{L^q(\mathbb{S}^{2n-1})}=1$.
 Let $T_{K_{\mu}^{\ga}}$ be the bilinear operator associated with the kernel $K_{\mu}^{\ga}$ as in (\ref{tkmuga}).
 By the definition in (\ref{bmodef}), we have
 \begin{align*}
 \big\Vert \LL_{\mu}(f_1,f_2)\big\Vert_{BMO(\bbrn)}&=\sup_{Q}\inf_{a\in\bbc}\frac{1}{|Q|}\int_Q{\big|\LL_{\mu}\big(f_1,f_2\big)(x)-a \big|}dx.
 \end{align*}
 Since $$\big|\LL_{\mu}\big(f_1,f_2\big)(x) -a\big|   \le \big|\LL_{\mu}\big(\chi_{Q^*}f_1,f_2\big)(x) \big|+\big|\LL_{\mu}\big(\chi_{(Q^*)^c}f_1,f_2\big)(x)-a \big|,$$
 the $BMO$ norm in the preceding expression is less than the sum of
 $$\II_1:=\sup_{Q}\frac{1}{|Q|}\int_Q\big| \LL_{\mu}\big(\chi_{Q^*}f_1,f_2\big)(x)\big|dx$$
 and
 $$\II_2:=\sup_{Q}\inf_{a\in\bbc}\frac{1}{|Q|}\int_Q{\big|\LL_{\mu}\big(\chi_{(Q^*)^c}f_1,f_2\big)(x)-a \big|}dx.$$
 
From the $L^2\times L^{\infty}\to L^2$ estimate in (\ref{2i2}), it follows that
 $$\II_1\le \sup_{Q}\frac{1}{|Q|^{1/2}}\big\Vert \LL_{\mu}({\chi_{Q^*}f_1,f_2)\big\Vert_{L^2(\bbrn)}}\lesssim \sup_{Q}\frac{1}{|Q|^{1/2}}\Vert f_1\Vert_{L^2(Q^*)}\lesssim 1.$$
 
 On the other hand, we have
 \begin{equation*}
 \II_2\le \sup_{Q}\frac{1}{|Q|}\int_Q  \big| \LL_{\mu}\big(\chi_{(Q^*)^c}f_1,f_2\big)(x)- \LL_{\mu}\big(\chi_{(Q^*)^c}f_1,f_2\big)(c_Q)     \big|  dx
 \end{equation*}
 and
 \begin{align*}
 & \big| \LL_{\mu}\big(\chi_{(Q^*)^c}f_1,f_2\big)(x)- \LL_{\mu}\big(\chi_{(Q^*)^c}f_1,f_2\big)(c_Q)     \big| \\
 &\le \sum_{\ga\in\bbz}\int_{\yyy\in(\bbrn)^2}\big|K_{\mu}^{\ga}(x-y_1,x-y_2)-K_{\mu}^{\ga}(c_Q-y_1,c_Q-y_2) \big|\chi_{(Q^*)^c}(y_1)d\yyy\\
 &\le \sum_{\ga:2^{\ga}\ell(Q)<1}\cdots +\sum_{\ga:2^{\ga}\ell(Q)\ge 1}\cdots =:\JJ_1+\JJ_2.
 \end{align*}
 We note that
 \begin{align*}
 &\int_{\yyy\in(\bbrn)^2}\big|K_{\mu}^{\ga}(x-y_1,x-y_2)-K_{\mu}^{\ga}(c_Q-y_1,c_Q-y_2) \big|\chi_{(Q^*)^c}(y_1)d\yyy\\
 &\lesssim \int_{\zzz\in (\bbrn)^2:|\zzz|\sim 2^{-\ga}} 2^{2\ga n}|\Omega(\zzz\;')|   \;  \Xi_{\mu+\ga}(x,c_Q,z_1)    \;     d\zzz
 \end{align*}
 where \begin{align*}
 &\Xi_{\mu+\ga}(x,c_Q,z_1)\\
 &:= \int_{(\bbrn)^2}\big| \Phi_{\mu+\ga}(x-y_1-z_1,x-y_2) -\Phi_{\mu+\ga}(c_Q-y_1-z_1,c_Q-y_2)\big|\chi_{(Q^*)^c}(y_1)d\yyy.
 \end{align*}
 
 For the estimation of $\JJ_1$, we write
 \begin{align*}
 &\Xi_{\mu+\ga}(x,c_Q,z_1)\\
 &\le  \int_{ (\bbrn)^2}\big| \Phi_{\mu+\ga}(x-y_1-z_1,x-y_2) -\Phi_{\mu+\ga}(c_Q-y_1-z_1,x-y_2)\big|d\yyy\\
 &\q+ \int_{(\bbrn)^2}\big| \Phi_{\mu+\ga}(c_Q-y_1-z_1,x-y_2) -\Phi_{\mu+\ga}(c_Q-y_1-z_1,c_Q-y_2)\big|d\yyy\\
 &=\int_{ (\bbrn)^2}\big| \Phi_{\mu+\ga}(x-y_1,y_2)-\Phi_{\mu+\ga}(c_Q-y_1,y_2)\big| d\yyy\\
 &\qq +\int_{ (\bbrn)^2}\big| \Phi_{\mu+\ga}(y_1,x-y_2)-\Phi_{\mu+\ga}(y_1,c_Q-y_2)\big| d\yyy,
 \end{align*} using a change of variables. Since $x\in Q$, the preceding expression is controlled by
 $2^{\delta \mu} \big( 2^{\ga}\ell(Q)\big)^{\delta},$
 using the argument that led to (\ref{lambdaest}). This shows that
 \begin{equation*}
 \JJ_1\lesssim_{\delta} 2^{\delta \mu} \Vert \Omega\Vert_{L^1(\mathbb{S}^{2n-1})}\sum_{\ga:2^{\ga}\ell(Q)<1} \big( 2^{\ga}\ell(Q)\big)^{\delta} \lesssim_{\delta} 2^{\delta \mu}.
 \end{equation*}
 
 Moreover, we have
 \begin{align*}
 \Xi_{\mu+\ga}(x,c_Q,z_1)&\le \int_{(\bbrn)^2}\big| \Phi_{\mu+\ga}(x-y_1-z_1,y_2)\big|\chi_{(Q^*)^c}(y_1)d\yyy\\
  &\qq+\int_{(\bbrn)^2}\big| \Phi_{\mu+\ga}(c_Q-y_1-z_1,y_2)\big|\chi_{(Q^*)^c}(y_1)d\yyy\\
  &\lesssim \int_{(Q^*)^c} \frac{2^{(\mu+\ga)n}}{(1+2^{\mu+\ga}|x-y_1-z_1|)^{n+1}}dy_1\\
  &\qq+\int_{(Q^*)^c} \frac{2^{(\mu+\ga)n}}{(1+2^{\mu+\ga}|c_Q-y_1-z_1|)^{n+1}}dy_1.
 \end{align*}
 If $2^{\ga}\ell(Q)\ge 1$, then
 $$|x-y_1-z_1|, |c_Q-y_1-z_1|\gtrsim |y_1-c_Q|$$
  for $x\in Q$, $y_1\in (Q^*)^c$, and $|z_1|\le 2^{-\ga+1}$.
 This yields that
 $$ \Xi_{\mu+\ga}(x,c_Q,z_1)\lesssim 2^{-\mu}\big( 2^{\ga}\ell(Q)\big)^{-1}\le \big( 2^{\ga}\ell(Q)\big)^{-1}$$ for $\mu\ge \mu_0$
 and thus
 $$\JJ_2\lesssim \Vert \Omega\Vert_{L^1(\mathbb{S}^{2n-1})}\sum_{\ga:2^{\ga}\ell(Q)\ge 1}\big( 2^{\ga}\ell(Q)\big)^{-1}\lesssim 1.$$
 
 Finally, we arrive at the inquality
 $$\II_2\lesssim_{\delta} 2^{\delta \mu}$$
   for all $\delta>0$, which completes the proof of Propisition \ref{iii}.
 \end{proof}

 \section{Proof of (\ref{mainidea}) in Region $\mathrm{II}$}
 
 In this section, we consider $1<p_1,p_2<\infty$ and $1/2<p<1$ satisfying $1/p=1/p_1+1/p_2$.

 Choose $1<p_0<\infty$ so that 
 \begin{equation*}
 1+\frac{1}{p_0}=\frac{1}{p} \qq \Big( \text{ that is, }~p=\frac{p_0}{p_0+1}~\Big).
 \end{equation*} 
 Then we shall show 
 two 
  end-point estimates $L^{1}\times L^{p_0}\to L^{p,\infty}$ and $L^{p_0}\times L^1\to L^{p,\infty}$ for which the Calder\'on-Zygmund decomposition is applicable as in Proposition \ref{1i1}. 
 We note that $\frac{p}{2p-1}=\frac{p_0}{p_0-1}$ in this case.
 \begin{proposition}\label{pip}
 Let $\delta>0$, $\mu\ge \mu_0$, and $1<p_0<\infty$.
 Suppose that $$\max{\Big(\frac{4}{3},\frac{p_0}{p_0-1}\Big)}<q\le \infty.$$
 Then we have
 \begin{equation}\label{propo311}
 \big\Vert \LL_{\mu}(f_1,f_2)\big\Vert_{L^{\frac{p_0}{p_0+1},\infty}(\bbrn)}\lesssim_{\delta} 2^{\delta \mu}\Vert \Omega\Vert_{L^{q}(\mathbb{S}^{2n-1})}\Vert f_1\Vert_{L^{1}(\bbrn)}\Vert f_2\Vert_{L^{p_0}(\bbrn)}
 \end{equation}
 and
  \begin{equation}\label{propo312}
 \big\Vert \LL_{\mu}(f_1,f_2)\big\Vert_{L^{\frac{p_0}{p_0+1},\infty}(\bbrn)}\lesssim_{\delta} 2^{\delta \mu}\Vert \Omega\Vert_{L^{q}(\mathbb{S}^{2n-1})}\Vert f_1\Vert_{L^{p_0}(\bbrn)}\Vert f_2\Vert_{L^{1}(\bbrn)}.
 \end{equation}
 \end{proposition}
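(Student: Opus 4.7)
The plan is to carry out the Calder\'on--Zygmund argument of Proposition \ref{1i1}, now targeting the quasi-norm $L^{p,\infty}$ with $p=p_0/(p_0+1)<1$ and compensating for the weaker hypothesis on $f_2$ by means of the Hardy--Littlewood maximal function. By symmetry only \eqref{propo311} needs to be handled. After normalizing $\|f_1\|_{L^1}=\|f_2\|_{L^{p_0}}=\|\Omega\|_{L^q}=1$, the goal reduces to
\[
\bigl|\{x\in\bbrn:|\LL_\mu(f_1,f_2)(x)|>t\}\bigr|\lesssim_\delta 2^{\delta\mu p}\,t^{-p}\qquad\text{for every }t>0.
\]
I would apply the Calder\'on--Zygmund decomposition to $f_1$ at height $\lambda:=t^p$, writing $f_1=g_1+\sum_{Q\in\mathcal A}b_{1,Q}$ in the usual way, and split the level set into good and bad contributions.

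For the good part I would invoke the strong bound of Region I established in Section \ref{proofregion1},
\[
\|\LL_\mu(g_1,f_2)\|_{L^r(\bbrn)}\lesssim 2^{-\delta_0\mu}\|g_1\|_{L^{r_1}}\|f_2\|_{L^{p_0}},\qquad \tfrac{1}{r}=\tfrac{1}{r_1}+\tfrac{1}{p_0},
\]
with some $r\in(1,p_0)$, which is legitimate because $r/(2r-1)<1<4/3$ for $r>1$, so the Region~I assumption $q>4/3$ is implied by $q>\max(4/3,p_0')$. Combined with $\|g_1\|_{L^{r_1}}\lesssim\lambda^{1-1/r_1}$, Chebyshev, the choice $\lambda=t^p$, and the identity $1-p=p/p_0$ inherited from $1+1/p_0=1/p$, the exponents collapse exactly to $t^{-p}$.

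For the bad part, the contribution on $\bigcup_Q Q^*$ is trivially controlled by $|\bigcup_Q Q^*|\lesssim 1/\lambda=t^{-p}$. On $(\bigcup_Q Q^*)^c$ I would apply $L^1$-Chebyshev and then split each $T_{K_\mu^\gamma}(b_{1,Q},f_2)$ into the regimes $2^\gamma\ell(Q)<1$ (smoothness) and $2^\gamma\ell(Q)\ge 1$ (size), exactly as in the proof of Proposition \ref{1i1}. The new ingredient, replacing the pointwise bound $\|f_2\|_{L^\infty}\le 1$ used there, is to factor $|\Phi_{\mu+\gamma}(u_1,u_2)|$ as a product of Schwartz-tail profiles in $u_1$ and $u_2$; after integrating in $y_2$ the $L^\infty$-factor of Proposition \ref{1i1} is replaced by a translate of $\mathcal Mf_2$, while the spherical integration in $z$ is handled by H\"older against $\|\Omega\|_{L^{p_0'}(\mathbb S^{2n-1})}\lesssim \|\Omega\|_{L^q(\mathbb S^{2n-1})}$, which is exactly where the hypothesis $q>p_0'$ is used. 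The smoothness and size gains \eqref{lambdaest}--\eqref{thetaest} survive these modifications, producing the usual $(2^\gamma\ell(Q))^{\pm\delta}$ factors. Summing in $\gamma$ and $Q$, using $\|b_{1,Q}\|_{L^\infty}\lesssim\lambda$, H\"older in $y_1$, and the maximal inequality \eqref{HLmaximal}, yields
\[
\|\LL_\mu(b,f_2)\|_{L^1((\cup_Q Q^*)^c)}\lesssim 2^{\delta\mu}\lambda\,\|\mathcal Mf_2\|_{L^{p_0}}\,\bigl|\textstyle\bigcup_Q Q\bigr|^{1/p_0'}\lesssim 2^{\delta\mu}\lambda^{1/p_0};
\]
a final application of Chebyshev, together with $\lambda=t^p$ and $p/p_0=1-p$, then produces the desired weak bound $2^{\delta\mu}t^{-p}$.

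The main obstacle is precisely this bad-part step: replacing $\|f_2\|_{L^\infty}\le 1$ by an $L^{p_0}$-control while preserving the gains $(2^\gamma\ell(Q))^{\pm\delta}$ forces both a tensor-like factorization of $\Phi_{\mu+\gamma}$ and a H\"older manipulation of the spherical integration in $z$ that consumes $\|\Omega\|_{L^{p_0'}}$, which is the ultimate reason why the sharper hypothesis $q>p_0'$ appears. Reworking the pointwise kernel estimates leading to \eqref{lambdaest} and \eqref{thetaest} in the maximal-function setting is the central technical work of this proof.
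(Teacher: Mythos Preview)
Your overall architecture---Calder\'on--Zygmund decomposition of $f_1$ at height $t^{p}$, good part via a Region~I strong estimate, bad part via pointwise kernel estimates coupled with a maximal function in place of $\|f_2\|_{L^\infty}$---is exactly the paper's route, and your bookkeeping of exponents for the good part is correct. But the bad-part sketch hides two genuine difficulties that, as written, do not close.

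First, your spherical H\"older with exponents $(p_0',p_0)$ against $\|\Omega\|_{L^{p_0'}}$ produces, in the variable $z_2$, the average $\big(2^{\gamma n}\int_{|z_2|\lesssim 2^{-\gamma}}|\mathcal{M}f_2(x-z_2)|^{p_0}\,dz_2\big)^{1/p_0}\lesssim \mathcal{M}_{p_0}\mathcal{M}f_2(x)$, and $\mathcal{M}_{p_0}$ is \emph{not} bounded on $L^{p_0}$. The paper uses H\"older with $(q,q')$ instead, obtaining $\mathcal{M}_{q'}\mathcal{M}f_2$; the hypothesis $q>p_0'$ is used precisely so that $q'<p_0$ and \eqref{HLmaximal} applies. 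Your invocation of $\|\Omega\|_{L^{p_0'}}\lesssim\|\Omega\|_{L^q}$ therefore spends the hypothesis in the wrong place.

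Second, and more seriously, the choice of $L^1$-Chebyshev for the bad part does not yield an arbitrarily slow $2^{\delta\mu}$ growth. After factoring out $\mathcal{M}_{q'}\mathcal{M}f_2(x)$ pointwise, an $L^1$ bound on the bad part forces (via H\"older in $x$) the remaining kernel sum into $L^{p_0'}$. But the smoothing kernels $U^{L,\delta}_{\mu+\gamma}$ are $L^1$-normalized bumps at scale $2^{-(\mu+\gamma)}$, so $\|U^{L,\delta}_{\mu+\gamma}\|_{L^{p_0'}}\sim 2^{(\mu+\gamma)n/p_0}$, and this fixed $2^{\mu n/p_0}$ cost cannot be absorbed into $2^{\delta\mu}$ with small $\delta$; neither $\|b_{1,Q}\|_{L^\infty}\lesssim\lambda$ nor ``H\"older in $y_1$'' removes it. The paper avoids this by applying Chebyshev with the exponent $p=p_0/(p_0+1)<1$ at the outset, so that after H\"older (with exponents $p_0+1$ and $(p_0+1)/p_0$) the kernel sum lands in $L^1$, where $\|U^{L,\delta}_{\mu+\gamma}\|_{L^1}\lesssim 1$ uniformly in $\mu,\gamma$. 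In short, the estimates \eqref{lambdaest}--\eqref{thetaest} of Proposition~\ref{1i1} are $L^1_x$ bounds paired with $\|f_2\|_{L^\infty}$ and do not transfer directly; the paper develops new pointwise bounds and combines them with the sub-unit Chebyshev exponent, which is the step your sketch is missing.
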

\begin{figure}[h]
\begin{tikzpicture}
\draw [<->] (0,5)--(0,0)--(5,0);
\draw[dash pattern= { on 2pt off 1pt}] (0,4)--(4,4)--(4,0);
\draw[-] (1.3,4)--(4,1.3);
\draw[dash pattern= { on 2pt off 1pt}] (0,4)--(4,0);
\draw[dash pattern= { on 2pt off 1pt}] (1.6,4)--(4,1.6);
\node [below left] at (0,0) {\tiny$(0,0)$};
\filldraw[fill=black] (3.3,2)  circle[radius=0.5mm];
\filldraw[fill=black] (3.3,0.35)  circle[radius=0.3mm];
\filldraw[fill=black] (3.3,2.3)  circle[radius=0.3mm];
\filldraw[fill=black] (4,1.6)  circle[radius=0.3mm];
\filldraw[fill=black] (1.6,4)  circle[radius=0.3mm];
\draw [dotted] (3.3,2.3)--(3.3,0.35);
\draw [dotted] (1.6,4)--(3.3,0.35);
\draw [dotted] (4,1.6)--(3.3,0.35);
\draw[-] (3.3,2)--(4.5,2.5);
\node [right] at (4.5,2.5) {\tiny$(\frac{1}{p_1},\frac{1}{p_2})$};
\draw[-] (3.3,2.3)--(4.5,3);
\node [right] at (4.5,3) {\tiny$(\frac{1}{p_1},\frac{1}{r})$};
\draw[-] (3.3,0.35)--(4.5,0.6);
\node [right] at (4.5,0.6) {\tiny$C=(\frac{1}{p_1},\frac{p_1-1}{2p_1})$};
\draw[-] (3,4.5)--(2.3,3.3);
\node [right] at (3,4.5) {\tiny$x_1+x_2=\frac{1}{\wt{p}} $};
\draw[-] (2.4,5)--(2,3.3);
\node [right] at (2.4,5) {\tiny$x_1+x_2=\frac{1}{p} $};
\draw[-] (1.4,4.4)--(1.6,4);
\node [above] at (1.4,4.4) {\tiny$D=(\frac{1}{p_0},1) $};
\draw[-] (4.7,2)--(4,1.6);
\node [right] at (4.7,2) {\tiny$E=(1,\frac{1}{p_0}) $};
\node [below] at (4,0) {\tiny$(1,0)$};
\node [left] at (0,4) {\tiny$(0,1)$};
\node [below] at (5,0) {\tiny$x_1$};
\node [below] at (1.5,1.5) {\tiny Region $\mathrm{I}$};
\node [left] at (0,5) {\tiny${x_2}$};
\end{tikzpicture}
\caption{Interpolation between estimates at $C$, $D$, and $E$}\label{region2}
\end{figure}
Taking the proposition temporarily for granted, let us prove (\ref{mainidea}).
We fix $1<p_1\le p_2<\infty$ and $1/2<p<1$ satisfying $1/p=1/p_1+1/p_2$, and suppose $q>\max{(\frac{4}{3},\frac{p}{2p-1})}$.
Then we note that $1<p_1<2$ and there exists $1/2<\wt{p}<p$ such that $q>\max{(\frac{4}{3},\frac{\wt{p}}{2\wt{p}-1})}$. Choose $1<r<p_2$ so that $1/\wt{p}=1/p_1+1/r$.
Since $\frac{p_1+1}{2p_1}<1<\frac{1}{p}<\frac{1}{\wt{p}}$ we can select $0<\theta<1$ for which
\begin{equation}\label{inter1}
\frac{1}{p}=\frac{p_1+1}{2p_1}(1-\theta)+\frac{1}{\wt{p}}\;\theta
\end{equation}
and subsequently,
\begin{equation}\label{inter2}
\frac{1}{p_2}=\frac{p_1-1}{2p_1}(1-\theta)+\frac{1}{r}\theta. 
\end{equation}
Now, from the estimate (\ref{mainidea}) in Region $\mathrm{I}$, it follows that
\begin{equation}\label{-interpol}
\big\Vert \LL_{\mu} \big\Vert_{L^{p_1}\times L^{\frac{2p_1}{p_1-1}}\to L^{\frac{2p_1}{p_1+1}}}\lesssim 2^{-\epsilon_0\mu}\Vert \Omega\Vert_{L^q(\mathbb{S}^{2n-1})}\q \text{ at } ~C:=(1/p_1,(p_1-1)/(2p_1))
\end{equation}
for some $\epsilon_0>0$, as the point $(\frac{1}{p_1},\frac{p_1-1}{2p_1})$ belongs to Region $\mathrm{I}$.

On the other hand, we choose $1<p_0<\infty$  such that 
$$\frac{1}{p_0}+1=\frac{1}{\wt{p}}~(=\frac{1}{p_1}+\frac{1}{r})$$
and then there exits $0<\wt{\theta}<1$ such that
$$\frac{1-\wt{\theta}}{p_0}+\wt{\theta}=\frac{1}{p_1}\q \text{ and } {1-\wt{\theta}}+\frac{\wt{\theta}}{p_0}=\frac{1}{r}.$$
Then we observe that
\begin{equation}\label{interpolparameter}
(1-\theta)\Big(\frac{1}{p_1},\frac{p_1-1}{2p_1}\Big)+\theta (1-\wt\theta) \Big( \frac{1}{p_0},1\Big)+\theta\wt{\theta}\Big( 1,\frac{1}{p_0}\Big)=\Big( \frac{1}{p_1},\frac{1}{p_2}\Big)
\end{equation}
where $1-\theta, \theta (1-\wt{\theta})$, and $\theta \wt{\theta}$ are numbers between $0$ and $1$ which play a role of $\theta_0$, $\theta_1$, and $\theta_2$ in Lemma \ref{interpollemma}, respectively, as the sum of them is equal to $1$,
Indeed, since $$q>\max{\Big(\frac{4}{3},\frac{p_0}{p_0-1}\Big)} ~\Big(=\max{\Big(\frac{4}{3},\frac{\wt{p}}{2\wt{p}-1}\Big)} \Big),$$
it follows from Proposition \ref{pip} that for arbitrary $\delta>0$,
\begin{align}
\big\Vert \LL_{\mu} \big\Vert_{L^{p_0}\times L^{1}\to L^{\wt{p},\infty}}&\lesssim_{\delta} 2^{\delta\mu}\Vert \Omega\Vert_{L^q(\mathbb{S}^{2n-1})}\q \text{ at } ~D:=(1/p_0,1)\label{1p01},\\
\big\Vert \LL_{\mu} \big\Vert_{L^{1}\times L^{p_0}\to L^{\wt{p},\infty}}&\lesssim_{\delta} 2^{\delta\mu}\Vert \Omega\Vert_{L^q(\mathbb{S}^{2n-1})}\q \text{ at } ~E:=(1,1/p_0),\label{11p0}
\end{align}
and choosing $\delta>0$ sufficiently small and applying Lemma \ref{interpollemma} to \eqref{-interpol}, \eqref{1p01}, and \eqref{11p0}, together with \eqref{interpolparameter},
we finally obtain
\begin{equation*}
\big\Vert \LL_{\mu}\big\Vert_{L^{p_1}\times L^{p_2}\to L^{p}}\lesssim_{\epsilon_0,\theta} 2^{-\delta_0\mu}\Vert \Omega\Vert_{L^q(\mathbb{S}^{2n-1})}
\end{equation*}  for some $\delta_0>0$. This is always possible as we can choose $\delta>0$ small enough in \eqref{1p01} and \eqref{11p0} while $\epsilon_0$ is a fixed number in \eqref{-interpol}.
See Figure \ref{region2} for the interpolation.

 This ends the proof of (\ref{mainidea}) as the case $p_2<p_1$ follows via symmetry.

\hfill

Now it remains to prove Proposition \ref{pip}.
 \begin{proof}[Proof of Proposition \ref{pip}]
We are only concerned with the first inequality appealing to symmetry for the other case.
Moreover, without loss of generality, we may assume $\Vert f_1\Vert_{L^1(\bbrn)}=\Vert f_2\Vert_{L^{p_0}(\bbrn)}=\Vert \Omega\Vert_{L^{q}(\mathbb{S}^{2n-1})}=1$ and then it is enough to prove
\begin{equation}\label{weakmainest}
\Big|\Big\{    x\in \bbrn:    \big|\LL_{\mu}(f_1,f_2)(x)\big|>t    \Big\}\Big|\lesssim_{\delta} 2^{\delta \mu\frac{p_0}{p_0+1}}{t^{-\frac{p_0}{p_0+1}}}.
\end{equation}
 As in the proof of Proposition \ref{1i1},  by applying a technique of the Calder\'on-Zygmund decomposition, we write
 $$f_1=g_1+\sum_{Q\in\mathcal{A}}{b_{1,Q}}$$
 where $\mathcal{A}$ is a subset of disjoint dyadic cubes, $\big| \bigcup_{Q\in \mathcal{A}}Q\big|\lesssim {t^{-\frac{p_0}{p_0+1}}}$, $\supp(b_{1,Q})\subset Q$, $\int{b_{1,Q}(y)}dy=0$, $\Vert b_{1,Q}\Vert_{L^1(\bbrn)}\lesssim t^{\frac{p_0}{p_0+1}}|Q|$, and $\Vert g_1\Vert_{L^r}\lesssim t^{(1-\frac{1}{r})\frac{p_0}{p_0+1}}$ for all $1\le r\le \infty$.

 First of all, from Chebyshev's inequality and  the estimate (\ref{mainidea}) in Region $\mathrm{I}$, it follows  that
\begin{align*}
\Big| \Big\{x\in\bbrn: \big| \LL_{\mu}\big(g_1,f_2\big)(x)\big|>\frac{t}{2} \Big\}\Big|&\lesssim t^{-\frac{2p_0}{p_0+1}}\big\Vert \LL_{\mu}(g_1,f_2)\big\Vert_{L^{\frac{2p_0}{p_0+1}}(\bbrn)}^{\frac{2p_0}{p_0+1}}\\
&\lesssim t^{-\frac{2p_0}{p_0+1}}\Vert g_1\Vert_{L^{2p_0'}(\bbrn)}^{\frac{2p_0}{p_0+1}}\lesssim {t^{-\frac{p_0}{p_0+1}}}
\end{align*}
where the penultimate inequality follows from the $L^{2p_0'}\times L^{p_0}\to L^{\frac{2p_0}{p_0+1}}$ boundedness of $\LL_{\mu}$ as $(\frac{1}{2p_0'},\frac{1}{p_0})$ is inside Region $\mathrm{I}$. Here, $p_0'$ is the conjugate index of $p_0$.

 Since it is clear that $\big| \bigcup_{Q\mathcal{A}}Q^*\big|\lesssim {t^{-\frac{p_0}{p_0+1}}}$, the proof of (\ref{weakmainest}) can be reduced to the inequality
 \begin{equation}\label{toshowprop}
 \Big| \Big\{x\in \Big( \bigcup_{Q\in\mathcal{A}}Q^*\Big)^c: \Big| \LL_{\mu}\Big(\sum_{Q\in\mathcal{A}}b_{1,Q},f_2\Big)(x)\Big|>\frac{t}{2} \Big\}\Big|\lesssim_{\delta}2^{\delta \mu\frac{p_0}{p_0+1}}{t^{-\frac{p_0}{p_0+1}}}.
 \end{equation}
 The left-hand side of (\ref{toshowprop}) is, via Chebyshev's inequality, less than
 \begin{align*}
 &{t^{-\frac{p_0}{p_0+1}}}\int_{(\bigcup_{Q\in\mathcal{A}}Q^*)^c}\Big( \sum_{Q\in\mathcal{A}}\sum_{\gamma\in\bbz} \big|T_{K_{\mu}^{\ga}}\big(b_{1,Q},f_2 \big)(x) \big|\Big)^{\frac{p_0}{p_0+1}}dx\\
 &\le {t^{-\frac{p_0}{p_0+1}}}\int_{(\bigcup_{Q\in\mathcal{A}}Q^*)^c}\Big( \sum_{Q\in\mathcal{A}}\sum_{\gamma: 2^{\ga}\ell(Q)\ge 1} \big|T_{K_{\mu}^{\ga}}\big(b_{1,Q},f_2 \big)(x) \big|\Big)^{\frac{p_0}{p_0+1}}dx\\
 &\qq \qq \qq+{t^{-\frac{p_0}{p_0+1}}}\int_{\bbrn}\Big( \sum_{Q\in\mathcal{A}}\sum_{\gamma: 2^{\ga}\ell(Q)<1} \big|T_{K_{\mu}^{\ga}}\big(b_{1,Q},f_2 \big)(x) \big|\Big)^{\frac{p_0}{p_0+1}}dx\\
 &=:\UU_1+\UU_2.
 \end{align*}

 To estimate $\UU_1$, we see that
 \begin{align*}
 &\big|T_{K_{\mu}^{\ga}}\big(b_{1,Q},f_2 \big)(x) \big| \\
 &\lesssim\int_{(\bbrn)^2}\int_{|\zzz|\sim 2^{-\ga}}2^{2\ga n}\big| \Omega(\zzz\; ')\big| \big|\Phi_{\mu+\ga}(x-y_1-z_1,x-y_2-z_2) \big| |b_{1,Q}(y_1)| |f_2(y_2)|\; d\zzz \; d\yyy\\
 &\lesssim_L\int_{|\zzz|\sim 2^{-\ga}} 2^{2\ga n}\big| \Omega(\zzz\; ')\big| \Big(\int_{y_1\in Q}\frac{2^{(\mu+\ga)n}}{(1+2^{\mu+\ga}|x-y_1-z_1|)^L}|b_{1,Q}(y_1)|   dy_1\Big)\\
 & \qq\qq\times \Big(\int_{\bbrn}\frac{2^{(\mu+\ga)n}}{(1+2^{\mu+\ga}|x-y_2-z_2|)^L}|f_2(y_2)|   dy_2\Big) \;d\zzz
 \end{align*} for all $L>n$.
 Clearly, we have
 \begin{equation}\label{maximalbound}
 \int_{\bbrn}\frac{2^{(\mu+\ga)n}}{(1+2^{\mu+\ga}|x-y_2-z_2|)^L}|f_2(y_2)|   dy_2\lesssim \mathcal{M}f_2(x-z_2)
 \end{equation}
 and for $2^{\ga}\ell(Q)\ge 1$ and $|z_1|\le 2^{-\ga+1}$,
 $$\int_{y_1\in Q}\frac{2^{(\mu+\ga)n}}{(1+2^{\mu+\ga}|x-y_1-z_1|)^L}|b_{1,Q}(y_1)|   dy_1\lesssim \frac{2^{(\mu+\ga)n}}{(1+2^{\mu+\ga}|x-c_Q|)^L}\Vert b_{1,Q}\Vert_{L^1(\bbrn)}$$ because $|x-y_1-z_1|\gtrsim |x-c_{Q}|$.
 Therefore, we have
 \begin{align*}
 &\big|T_{K_{\mu}^{\ga}}\big(b_{1,Q},f_2 \big)(x) \big|\\
 &\lesssim \frac{2^{(\mu+\ga)n}}{(1+2^{\mu+\ga}|x-c_Q|)^L}\Vert b_{1,Q}\Vert_{L^1(\bbrn)} \int_{|\zzz|\sim 2^{-\ga}} 2^{2\ga n}\big| \Omega(\zzz \; ')\big| \mathcal{M}f_2(x-z_2) d\zzz.
 \end{align*}
Now H\"older's inequality yields
 \begin{align}\label{doublemaximal}
 &\int_{|\zzz|\sim 2^{-\ga}} 2^{2\ga n}\big| \Omega(\zzz \; ')\big| \mathcal{M}f_2(x-z_2) d\zzz \nonumber\\
 &\le \Big(\int_{|\zzz|\sim 2^{-\ga}}{2^{2\ga n}\big| \Omega(\zzz\; ')\big|^q}d\zzz \Big)^{1/q}\Big( \int_{|\zzz|\sim 2^{-\ga}}{2^{2\ga n}\big|\mathcal{M}f_2(x-z_2)\big|^{q'}}d\zzz \;'\Big)^{1/q'} \nonumber\\
 &\le \Vert \Omega\Vert_{L^q(\mathbb{S}^{2n-1})}\Big(2^{\ga n}\int_{|z_2|\lesssim 2^{-\ga}}\big| \mathcal{M}f_2(x-z_2)\big|^{q'}dz_2 \Big)^{1/q'}\lesssim \mathcal{M}_{q'}\mathcal{M}f_2(x)
 \end{align}
 and thus
 \begin{align*}
 \big|T_{K_{\mu}^{\ga}}\big(b_{1,Q},f_2 \big)(x) \big|\lesssim  \frac{2^{(\mu+\ga)n}}{(1+2^{\mu+\ga}|x-c_Q|)^L}\Vert b_{1,Q}\Vert_{L^1(\bbrn)}\mathcal{M}_{q'}\mathcal{M}f_2(x).
 \end{align*}
 This, together with H\"older's inequality, deduces that $ \UU_1$ is dominated by a constant times
 \begin{align*}
&{t^{-\frac{p_0}{p_0+1}}}\int_{(\bigcup_{Q\in\mathcal{A}}Q^*)^c} \Big(  \mathcal{M}_{q'}\mathcal{M}f_2(x)\sum_{Q\in\mathcal{A}}\sum_{\ga: 2^{\ga}\ell(Q)\ge 1}  \frac{2^{(\mu+\ga)n}}{(1+2^{\mu+\ga}|x-c_Q|)^L}\Vert b_{1,Q}\Vert_{L^1(\bbrn)}     \Big)^{\frac{p_0}{p_0+1}}dx\\
 &\le {t^{-\frac{p_0}{p_0+1}}} \big\Vert \mathcal{M}_{q'}\mathcal{M}f_2\big\Vert_{L^{p_0}(\bbrn)}^{ \frac{p_0}{p_0+1}}\Big(\sum_{Q\in\mathcal{A}}\sum_{\ga: 2^{\ga}\ell(Q)\ge 1}\Big\Vert \frac{2^{(\mu+\ga)n}}{(1+2^{\mu+\ga}|\cdot-c_Q|)^L} \Big\Vert_{L^1((Q^*)^c)}\Vert b_{1,Q}\Vert_{L^1(\bbrn)}\Big)^{\frac{p_0}{p_0+1}}.
 \end{align*}
 Since $1< q'<p_0$, which is equivalent to $q>\frac{p_0}{p_0-1}$,
 the $L^{p_0}$ norm is controlled by $\Vert f_2\Vert_{L^{p_0}(\bbrn)}=1$, using the $L^{p_0}$ boundedness of both $\mathcal{M}_{q'}$ and $\mathcal{M}$ in (\ref{HLmaximal}).
 Moreover, using the fact that for $\mu\ge \mu_0$,
 $$\Big\Vert \frac{2^{(\mu+\ga)n}}{(1+2^{\mu+\ga}|\cdot-c_Q|)^L} \Big\Vert_{L^1((Q^*)^c)}\lesssim 2^{-\mu(L-n)}\big(2^{\ga}\ell(Q) \big)^{-(L-n)}\le \big(2^{\ga}\ell(Q) \big)^{-(L-n)},$$ we have
 \begin{equation*}
 \sum_{Q\in\mathcal{A}}\sum_{\ga: 2^{\ga}\ell(Q)\ge 1}\Big\Vert \frac{2^{(\mu+\ga)n}}{(1+2^{\mu+\ga}|\cdot-c_Q|)^L} \Big\Vert_{L^1((Q^*)^c)}\Vert b_{1,Q}\Vert_{L^1(\bbrn)}\lesssim  \sum_{Q\in\mathcal{A}}\Vert b_{1,Q}\Vert_{L^1(\bbrn)}\lesssim 1.
 \end{equation*}
 This concludes 
 $$\UU_1\lesssim t^{-\frac{p_0}{p_0+1}}.$$
 
Next, we consider the other term $\UU_2$.
 By using the vanishing moment condition of $b_{1,Q}$, we write
 \begin{align}\label{tkest}
 &\big| T_{K_{\mu}^{\ga}}\big(b_{1,Q},f_2\big)(x)\big|\nonumber\\
 &\lesssim \int_{|\zzz|\sim 2^{-\ga}} 2^{2\ga n}\big| \Omega(\zzz\; ')\big| \Big(\int_{(\bbrn)^2}\big| \Phi_{\mu+\ga}(x-y_1-z_1,x-y_2-z_2)\\
 & \qq\qq\qq -\Phi_{\mu+\ga}(x-c_Q-z_1,x-y_2-z_2)\big| |b_{1,Q}(y_1)| |f_2(y_2)|d\yyy \Big) \; d\zzz. \nonumber
 \end{align}
 We observe that
 \begin{align*}
&\big| \Phi_{\mu+\ga}(x-y_1-z_1,x-y_2-z_2)-\Phi_{\mu+\ga}(x-c_Q-z_1,x-y_2-z_2)\big| \\
&\lesssim 2^{(\mu+\ga)}\ell(Q)\int_0^1\frac{2^{2(\mu+\ga)n}}{(1+2^{\mu+\ga}|x-ty_1-(1-t)c_Q-z_1|+2^{\mu+\ga}|x-y_2-z_2|)^{2L}}dt\\
&\le 2^{(\mu+\ga)}\ell(Q)V^L_{\mu+\ga}(x-z_1,y_1,c_Q) \frac{2^{(\mu+\ga)n}}{(1+2^{\mu+\ga}|x-y_2-z_2|)^L}
 \end{align*}
 where
 $$V^L_{\mu+\ga}(x,y_1,c_Q):=\int_0^1\frac{2^{(\mu+\ga)n}}{(1+2^{\mu+\ga}|x-ty_1-(1-t)c_Q|)^L}dt.$$
Furthermore,
 \begin{align*}
 &\big| \Phi_{\mu+\ga}(x-y_1-z_1,x-y_2-z_2)-\Phi_{\mu+\ga}(x-c_Q-z_1,x-y_2-z_2)\big|\\
 &\lesssim_L W_{\mu+\ga}^L(x-z_1,y_1,c_Q)\frac{2^{(\mu+\ga)n}}{(1+2^{\mu+\ga}|x-y_2-z_2|)^L}
 \end{align*}
 where $$W_{\mu+\ga}^L(x,y_1,c_Q):=\frac{2^{(\mu+\ga)n}}{(1+2^{\mu+\ga}|x-y_1|)^L}+\frac{2^{(\mu+\ga)n}}{(1+2^{\mu+\ga}|x-c_Q|)^L}.$$

By averaging these two estimates and letting
$$U_{\mu+\ga}^{L,\delta}(x,y_1,c_Q):=\big( V_{\mu+\ga}^L(x,y_1,c_Q)\big)^{\delta}\big(W_{\mu+\ga}^{L}(x,y_1,c_Q) \big)^{1-\delta},$$
 we obtain 
\begin{align}\label{differenceest}
&\big| \Phi_{\mu+\ga}(x-y_1-z_1,x-y_2-z_2)-\Phi_{\mu+\ga}(x-c_Q-z_1,x-y_2-z_2)\big| \nonumber\\
&\lesssim_{L,\delta}2^{\delta \mu}\big(2^{\ga}\ell(Q) \big)^{\delta}    U_{\mu+\ga}^{L,\delta}(x-z_1,y_1,c_Q)         \frac{2^{(\mu+\ga)n}}{(1+2^{\mu+\ga}|x-y_2-z_2|)^L}.
\end{align}
 Here, we note that
 \begin{equation}\label{umugaest}
 \big\Vert U_{\mu+\ga}^{L,\delta}(\cdot, y_1,c_Q)\big\Vert_{L^1(\bbrn)}\le \big\Vert V_{\mu+\ga}^{L}(\cdot,y_1,c_Q)\big\Vert_{L^1(\bbrn)}^{\delta} \big\Vert W_{\mu+\ga}^{L}(\cdot,y_1,c_Q)\big\Vert_{L^1(\bbrn)}^{1-\delta}\lesssim 1.
 \end{equation}
 
 By plugging (\ref{differenceest}) into (\ref{tkest}), we obtain
 \begin{align*}
 &\big| T_{K_{\mu}^{\ga}}\big(b_{1,Q},f_2\big)(x)\big|\\
 & \lesssim 2^{\delta \mu}\big( 2^{\ga}\ell(Q)\big)^{\delta}\int_{|\zzz|\sim 2^{-\ga}}2^{2\ga n}\big| \Omega(\zzz\; )\big|\Big(\int_{\bbrn}{U_{\mu+\ga}^{L,\delta}(x-z_1,y_1,c_Q) |b_{1,Q}(y_1)|}dy_1 \Big)\\
 &\qq\qq \qq\qq \qq \times \Big(\int_{\bbrn}{\frac{2^{(\mu+\ga)n}}{(1+2^{\mu+\ga}|x-y_2-z_2|)^L} |f_2(y_2)|}dy_2 \Big)\; d\zzz\\
 &\lesssim 2^{\delta \mu}\big( 2^{\ga}\ell(Q)\big)^{\delta}
 \int_{|z_1|\lesssim 2^{-\ga}}2^{\ga n}\int_{\bbrn}{U_{\mu+\ga}^{L,\delta}(x-z_1,y_1,c_Q) |b_{1,Q}(y_1)|}dy_1\\
 &\qq\qq \qq\qq \qq \times \int_{|z_2|\lesssim 2^{-\ga}}2^{\ga n}\big| \Omega(\zzz\; )\big|\mathcal{M}f_2(x-z_2)\; dz_2 dz_1\\
 &\lesssim 2^{\delta \mu}\big( 2^{\ga}\ell(Q)\big)^{\delta}
 \int_{|z_1|\lesssim 2^{-\ga}}2^{\ga n}\int_{\bbrn}{U_{\mu+\ga}^{L,\delta}(x-z_1,y_1,c_Q) |b_{1,Q}(y_1)|}dy_1\\
 &\qq\qq \qq\qq \qq \times \Big(\int_{|z_2|\lesssim 2^{-\ga}}2^{\ga n}\big| \Omega(\zzz\; )\big|^q\; dz_2\Big)^{1/q} \;\mathcal{M}_{q'}\mathcal{M}f_2(x)dz_1\\
 &\lesssim 2^{\delta \mu}\big( 2^{\ga}\ell(Q)\big)^{\delta} \;\mathcal{M}_{q'}\mathcal{M}f_2(x)
\\ 
 &\qq   \int_{\bbrn} |b_{1,Q}(y_1)| \int_{|z_1|\lesssim 2^{-\ga}}2^{\ga n}{U_{\mu+\ga}^{L,\delta}(x-z_1,y_1,c_Q) }
\Big(\int_{|z_2|\lesssim 2^{-\ga}}2^{\ga n}\big| \Omega(\zzz\; )\big|^q\; dz_2\Big)^{1/q} dz_1dy_1
 \end{align*} 
 where (\ref{maximalbound}) is applied. 
It follows from last control and \eqref{umugaest} that 
$\UU_2^{\tf{p_0+1}{p_0}}$ is bounded by 
\begin{align*}
&t^{-1}2^{\delta \mu} \Big\Vert\;\mathcal{M}_{q'}\mathcal{M}f_2\times \sum_{Q\in\mathcal{A}}\sum_{\gamma: 2^{\ga}\ell(Q)<1} \big( 2^{\ga}\ell(Q)\big)^{\delta}\\
&
\int_{\bbrn} |b_{1,Q}(y_1)|
\int_{|z_1|\lesssim 2^{-\ga}}2^{\ga n}{U_{\mu+\ga}^{L,\delta}(\cdot-z_1,y_1,c_Q) }
\Big(\int_{|z_2|\lesssim 2^{-\ga}}2^{\ga n}\big| \Omega(\zzz\; )\big|^q\; dz_2\Big)^{1/q} dz_1dy_1
\Big\Vert_{L^{\tf{p_0}{p_0+1}}(\rn)}\\
\lesssim&
t^{-1}2^{\delta \mu}\Vert f_2\Vert_{L^{p_0}}
\Big\Vert  \sum_{Q\in\mathcal{A}}\sum_{\gamma: 2^{\ga}\ell(Q)<1} \big( 2^{\ga}\ell(Q)\big)^{\delta}\\
&
\int_{\bbrn} |b_{1,Q}(y_1)|
\int_{|z_1|\lesssim 2^{-\ga}}2^{\ga n}{U_{\mu+\ga}^{L,\delta}(\cdot-z_1,y_1,c_Q) }
\Big(\int_{|z_2|\lesssim 2^{-\ga}}2^{\ga n}\big| \Omega(\zzz\; )\big|^q\; dz_2\Big)^{1/q} dz_1dy_1
\Big\Vert_{L^{1}(\rn)}\\
\lesssim&t^{-1}2^{\delta \mu}\Vert f_2\Vert_{L^{p_0}}
\sum_{Q\in\mathcal{A}}\sum_{\ga:2^{\ga}\ell(Q)<1}\big( 2^{\ga}\ell(Q)\big)^{\delta}
\\&
\int_{\bbrn} |b_{1,Q}(y_1)|
\int_{|z_1|\lesssim 2^{-\ga}}2^{\ga n}\int_{\rn}|U_{\mu+\ga}^{L,\delta}(x-z_1,y_1,c_Q) |dx
\Big(\int_{|z_2|\lesssim 2^{-\ga}}2^{\ga n}\big| \Omega(\zzz\; )\big|^q\; dz_2\Big)^{1/q} dz_1dy_1\\
\lesssim&t^{-1}2^{\delta \mu}\Vert f_2\Vert_{L^{p_0}}
 \sum_{Q\in\mathcal{A}}\sum_{\ga:2^{\ga}\ell(Q)<1}\big( 2^{\ga}\ell(Q)\big)^{\delta}
\Vert b_{1,Q}\Vert_{L^1(\rn)}
\Big(\int_{|z_1|\lesssim 2^{-\ga}}
\int_{|z_2|\lesssim 2^{-\ga}}2^{2\ga n}\big| \Omega(\zzz\; )\big|^q\; dz_2dz_1\Big)^{1/q} \\
\lesssim & t^{-1}2^{\delta \mu}
\sum_{Q\in\mathcal{A}}\Vert b_{1,Q}\Vert_{L^1(\bbrn)}\\
\lesssim & t^{-1}2^{\delta \mu}
\end{align*}
 where we apply the maximal inequality for $\mathcal{M}_{q'}$ and $\mathcal{M}$.
 
 This concludes the proof of (\ref{toshowprop}).
  \end{proof}

 \section*{Appendix : Proof of Lemma \ref{05031}}
We define
$$
\mathcal D_\la^1:=\big\{\om \in \mathcal{D}_{\la}: \supp( \om )\cap \{\xxxi\in (\bbrn)^2: 2^{-j}|\xi_1|\le|\xi_2|\le 2^j|\xi_1|\}\neq\eset \big\},
$$
and $\mathcal D^2_\la:=\mathcal D_\la\setminus \mathcal D_\la^1$. 
Correspondingly, we define 
$$m_0^i:=\sum_\la m^{i,\la}=\sum_{\la}\sum_{\om\in\mathcal D_\la^i}a_\om \om,$$ 
$m_k^i:=m_0^i(2^{-k}\;\vec{\cdot}\;)$, and $m^i:=\sum_{k\in\mathbb Z} m^i_k$ for $i=1,2$.
We can decompose $m_0=m_0^1+m_{0}^2$, and  $T_{m_0}= T_{m_0^1}+T_{m_0^2}$ where we recall that $T_m$ is the bilinear multiplier operators associated to $m$.

Due to the properties (i) and (ii), we obtain from a bilinear Plancherel-type estimates \cite[Proposition 2.2]{Paper1} (see also \cite[Section 3]{Gr_He_Sl2020}) that 
$$
\big\| T_{m^{1,\la}}(f_1,f_2)\big\|_{L^1(\bbrn)}\lesssim B^{1-\tf r4}2^{-\la (N(1-\tf r4)-C(n,r)\tf r4)}\|f_1\|_{L^2(\bbrn)}\|f_2\|_{L^2(\bbrn)}.
$$
Summing over $\la$, we have
$$
\big\| T_{m_0^1}(f_1,f_2)\big\|_{L^1(\bbrn)}\lesssim B^{1-\tf r4}\|f_1\|_{L^2(\bbrn)}\|f_2\|_{L^2(\bbrn)}
$$
as $N\gg \tf r{4-r}C(n,r)$.
Since $m^1_k=m^1_0(2^{-k}\;\vec{\cdot}\;)$, and $m_0^1$ is supported in $E^j\times E^j$ with $E^j=\{\xi\in\rn:\ 1\le |\xi|\le 2^{j+2}\}$, a standard dilation argument shows that 
$$
\big\| T_{m_k^1}(f_1,f_2)\big\|_{L^1(\bbrn)}\lesssim B^{1-\tf r4}\|f_{1,k}\|_{L^2(\bbrn)}\|f_{2,k}\|_{L^2(\bbrn)},
$$
where $\wh{f_{i,k}}:=\wh f_i\chi_{E^j_k}$ and $E^j_k:=\{\xi\in\rn:\ 2^k\le |\xi|\le 2^{j+k+2}\}$. Summing over $k$ and using the (almost) orthogonality for $f_{i,k}$ in $L^2$, we obtain 
\begin{align}\label{e05041}
\big\| T_{m^1}(f_1,f_2)\big\|_{L^1(\bbrn)}&\lesssim B^{1-\tf r4}\sum_{k\in\mathbb Z}\| f_{1,k} \|_{L^2(\bbrn)} \|f_{2,k}\|_{L^2(\bbrn)}\nonumber\\
&\lesssim B^{1-\tf r4}\Big( \sum_{k\in\mathbb Z}\| f_{1,k} \|_{L^2(\bbrn)}^2\Big)^{1/2} \Big( \sum_{k\in\mathbb{Z}}\|f_{2,k}\|_{L^2(\bbrn)}^2\Big)^{1/2}\nonumber\\
&\lesssim jB^{1-\tf r4}\|f_1\|_{L^2(\bbrn)}\|f_2\|_{L^2(\bbrn)}.
\end{align}

\medskip

The operator $T_{m^2}$ has already been handled by \cite[Section 6.1]{Paper1} with $m=2$, which goes back to \cite[Section 5]{Gr_He_Ho2018}. 
We provide only the outline of the proof for this case.

We may further assume that $|\xi_2|\le 100$ in the support of $m_0^2$ by symmetry. As a consequence, the number of $\nu_2$ in $\mathcal D_\la^2$ is at most $C2^{\la n}$. 
One easily verifies that the Fourier transform of $T_{m_{k}^2}(f_1,f_2)$ is supported in $\{\xi\in\rn:\ 2^{j+k-10}\le |\xi|\le 2^{j+k+10}\}$, so by the square function characterization of Hardy spaces, we see that 
\begin{equation}\label{e05042}
\big\| T_{m^2}(f_1,f_2)\big\|_{L^1(\bbrn)}= \Big\| \sum_k T_{m_k^2}(f_1,f_2)\Big\|_{L^1(\bbrn)}\lesssim \Big\| \Big( \sum_k \big| T_{m_k^2}(f_1,f_2)\big|^2\Big)^{1/2} \Big\|_{L^{1}(\bbrn)}.
\end{equation}
See, for instance, \cite[Section 6.1]{Paper1} for more details.

Let us define 
$$m_{k}^{2,\la}=m^{2,\la}(2^{-k}\; \vec{\cdot}\;)=\sum_{\om\in\mathcal D^2_\la}a_\om \om(2^{-k}\;\vec{\cdot}\;)$$ 
and $(L_{\nu,k}f)^{\wedge}:=\om_{\nu}(2^{-k}\cdot) \wh f$ for $\nu\in\bbzn$.
 We observe that 
$L_{\nu,k}f(x)\lesssim 2^{\la n/2} \mathcal{M}f(x)$ for all $\nu\in \bbzn$
where we recall that $\mathcal{M}$ is the Hardy-Littlewood maximal operator.
 Then the right-hand side of \eqref{e05042} is bounded by 
\begin{align*}
&\Big\| \Big( \sum_k\Big| \sum_\la\sum_{\nnu\in\mathcal D^2_\la}a_{\om_{\nnu}} L_{\nu_1,k}f_1 L_{\nu_2,k}f_2\Big|^2 \Big)^{1/2}\Big\|_{L^1(\bbrn)}\\
\le&\sum_\la\sum_{\nu_2}\Big\|\Big(\sum_k \Big|\sum_{\nu_1}a_{\om_{\nnu}} L_{\nu_1,k}f_1 L_{\nu_2,k}f_2\Big|^2 \Big)^{1/2}\Big\|_{L^1(\bbrn)}\\
\le &\sum_\la\sum_{\nu_2}\Big\|\Big( \sum_k\Big|\sum_{\nu_1}a_{\om_{\nnu}} L_{\nu_1,k}f_1\Big|^2\Big)^{1/2} 2^{\la n/2}\mathcal{M}f_2\Big\|_{L^1(\bbrn)}\\
\le & \sum_\la2^{\la n/2}\sum_{\nu_2}\Big\|\Big( \sum_k\Big| \sum_{\nu_1}a_{\om_{\nnu}} L_{\nu_1,k}f_1\Big|^2\Big)^{1/2} \Big\|_{L^2(\bbrn)}\|f_2\|_{L^2(\bbrn)}
\end{align*}
where we applied the Cauchy-Schwarz inequality and the maximal inequality (\ref{HLmaximal}) in the last estimate.
Since $(L_{\nu_1,k}f_1)^{\wedge}$ is supported in $B(0,2^{j+k+3})\setminus B(0,2^{j+k-3} )$, using Plancherel's identity, we control last expression by 
$$
\sum_\la 2^{\la n/2}\sum_{\nu_2}B 2^{-\la N}\|f_1\|_{L^2(\bbrn)}\|f_2\|_{L^2(\bbrn)}
\lesssim B \|f_1\|_{L^2(\bbrn)}\|f_2\|_{L^2(\bbrn)}
$$
since the number of $\nu_2$ in $\mathcal D_\la^2$ is at most $2^{\la n}$ and $N$ is sufficiently large.
This combined with \eqref{e05041}  gives  \eqref{e05032}.

 \section*{Statements and Declarations} 
 {\bf Conflict of interest}\\
On behalf of all authors, the corresponding author states that there is no conflict of interest.

 {\bf Data availability}\\
Data sharing not applicable to this article as no datasets were generated or analysed during the current study.



\end{document}